\newcommand{\mytitle}{Inverse Problems of Combined Photoacoustic and Optical Coherence Tomography}
\title{\mytitle}
\author{P. Elbau$^1$\\{\footnotesize\href{mailto:peter.elbau@univie.ac.at}{peter.elbau@univie.ac.at}}
\and L. Mindrinos$^1$\\{\footnotesize\href{mailto:leonidas.mindrinos@univie.ac.at}{leonidas.mindrinos@univie.ac.at}}
\and O. Scherzer$^{1,2}$\\{\footnotesize\href{mailto:otmar.scherzer@univie.ac.at}{otmar.scherzer@univie.ac.at}}}
\titleformat{\section}{\filcenter\sc\large}{\thesection.\;}{0em}{}
\titleformat{\subsection}[runin]{\bf}{\thesubsection.\;}{0em}{}[.]
\footnotesize\sc{\mytitle}}%
\theoremstyle{break}
\newtheorem{lemma}{Lemma}[section]
\newaliascnt{proposition}{lemma}
\newtheorem{proposition}[proposition]{Proposition}
\newaliascnt{corollary}{lemma}
\newaliascnt{invpro}{lemma}
\newaliascnt{definition}{lemma}
\newtheorem{definition}[definition]{Definition}
\newaliascnt{example}{lemma}
\newaliascnt{convention}{lemma}
\newaliascnt{remark}{lemma}
\newtheorem{remark}[remark]{Remark}
\theoremstyle{nonumberplain}
\newtheorem{proof}{Proof}
\newcommand{\R}{\mathbbm{R}}
\renewcommand{\b}{\bm}
\renewcommand{\j}{\bm{\bar{j}}}
\newcommand{\e}{\mathrm e}
\renewcommand{\i}{\mathrm i}
\renewcommand{\d}{\,\mathrm d}
\let\RE\Re
\let\Re=\undefined
\DeclareMathOperator{\Re}{\RE e}
\let\IM\Im
\let\Im=\undefined
\DeclareMathOperator{\Im}{\IM m}
\DeclareMathOperator{\curl}{\mathbf{curl}}
\let\div=\undefined
\DeclareMathOperator{\div}{div}
\DeclareMathOperator{\grad}{\mathbf{grad}}
\DeclareMathOperator{\supp}{supp}
\begin{document}

\maketitle
\hspace*{1em}
\parbox[t]{0.49\textwidth}{\footnotesize
\hspace*{-1ex}$^1$Computational Science Center\\
University of Vienna\\
Oskar-Morgenstern-Platz 1\\
A-1090 Vienna, Austria}
\parbox[t]{0.4\textwidth}{\footnotesize
\hspace*{-1ex}$^2$Johann Radon Institute for Computational\\
\hspace*{1em}and Applied Mathematics (RICAM)\\
Altenbergerstra{\ss}e 69\\
A-4040 Linz, Austria}

\vspace*{2em}

\begin{abstract}
Optical coherence tomography (OCT) and photoacoustic tomography (PAT) are emerging non-invasive biological and medical imaging techniques. It is a recent trend in experimental science to design experiments that perform PAT and OCT imaging at once. In this paper we present a mathematical model describing the dual experiment.

Since OCT is mathematically modelled by Maxwell's equations or some simplifications of it, whereas the light propagation in quantitative photoacoustics is modelled by (simplifications of) the radiative transfer equation, the first step in the derivation of a mathematical model of the dual experiment is to obtain a unified mathematical description, which in our case are Maxwell's equations. As a by-product we therefore derive a new mathematical model of photoacoustic tomography based on Maxwell's equations.

It is well known by now, that without additional assumptions on the medium, it is not possible to uniquely reconstruct all optical parameters from either one of these modalities alone. We show that in the combined approach one has additional information, compared to a single modality, and the inverse problem of reconstruction of the optical parameters becomes feasible.
\end{abstract}

\section{Introduction}
Only recently there have been developed experimental setups that can perform photoacoustic and optical coherence tomography experiments in parallel, see~\cite{ZhaPovLauAleHof11} and further developments in~\cite{LiuSchmSanZab13, LiuSchmSanZab14}.
Combined setups can be used for imaging biological tissue up to a depth of approximately~\SI{5}{\milli\metre}. 
They have been used for in vivo studies of human and mouse skins. In the current state of experiments, the two recorded modalities are visualised by superposition after registration, we refer the reader to the review paper~\cite{DreLiuKumKam14}.

In this paper, we derive a mathematical model for quantitative imaging of the multi-modal experiment based on Maxwell's equations. This study characterises the additional information obtainable by the dual experiment. In this paper we assume that the sample is an  inhomogeneous, isotropic, non-magnetic, linear dielectric medium, and the imaging parameters of the medium are the conductivity, the susceptibility and the Gr\"uneisen parameter. 

Even though the mathematical modelling of optical coherence tomography based on Maxwell's equations is well established, most of the literature relies on simplified models of the Helmholtz equation \cite{AndThrYurTycJorFro04, BruCha05, Fer10, FerDreHitLas03, RalMarCarBop06}, where, as a consequence,  the frequency dependence of the susceptibility is neglected. It is a recent trend to consider the more general case of Maxwell's equations for the mathematical description of the OCT system \cite{BreReiKie15, ElbMinSch15, SanAraBarCarCor15, SilCor13}.

The mathematical modelling of quantitative photoacoustics, on the other hand, is based on the radiative transfer equation or its diffusion approximation~\cite{BalUhl10, HalNeuRab15}. Maxwell's equations have only been considered for mathematical models in thermoacoustic tomography where low-frequency radiation is used for illumination \cite{BalRenUhlZho11, BalZho14}.

To formulate the dual-modal setup, we consider Maxwell's equations as the basic modeling equations since both imaging techniques 
rely on the same excitation. In particular, this provides a more general model for quantitative photoacoustics based on Maxwell's equations, which is also applicable in the case of high frequency radiation.

This paper is organized as follows: In \autoref{sePAT}, we give an overview of the inverse problem in photoacoustics. We present the governing equations and the commonly used simplifications. Next, we formulate mathematically the problem of optical coherence tomography starting from Maxwell's microscopic equations. For an extensive mathematical modelling of optical coherence tomography, we refer to~\cite{ElbMinSch15}. The system of equations for the dual experiment is obtained in \autoref{sePAT_OCT}. In \autoref{seFre}, the equation system is transformed to a Fredholm integral equation for the Gr\"uneisen parameter under the far field and Born approximations, 
respectively. Finally, we discuss the connection between the optical parameters appearing in the radiative transfer equation and Maxwell's equations.

To simplify the reading, we summarise the basic notations used in this paper in the following tables.
\begin{longtable}[c]{|l|l|c|}
\hline
Symbol & Quantity & Definition \\ \hline
$c$ & speed of light & \eqref{eq:rtf0} \\
 $\mu_{\mathrm a}$ & absorption coefficient & \eqref{eq:rtf0}\\
 $\mu_{\mathrm s}$ & scattering coefficient & \eqref{eq:rtf0}\\
  $h$ & Planck constant & \eqref{eq:fluence}\\ 
 $\epsilon$ & diffusion coefficient & \eqref{eqTau}\\
 $\gamma$ & Gr\"uneisen parameter & \eqref{eqInitialPressure}\\
 $K$ & Bulk modulus & \eqref{eq:wave}\\
  $\varrho$ & mass density & \eqref{eq:wave}\\
  $c_s$ & speed of sound & \eqref{eq:wave2} \\
   $\mu$ & electric permittivity & \eqref{eqDielectricum}\\
 $\sigma$ & electric conductivity & \eqref{eqSusc}\\
 $\chi$ & electric susceptibility & \eqref{eqSusc}\\
 \hline
\caption{Physical Quantities}
\end{longtable}

\section{Photoacoustic Imaging}\label{sePAT}
Photoacoustic tomography is a hybrid imaging technique which measures the acoustic response of an object upon illumination with an electromagnetic wave. In experiments the object is illuminated with a short laser pulse, typically with light in the near infrared 
spectrum. This pulse is absorbed by the medium, where the exact amount varies locally depending on the material properties, in particular on the absorption coefficient; and thereby a fraction is transformed into heat energy. 
This leads to a local change of pressure in the medium, where the transformation factor is again depending on the local thermodynamic properties of the medium, typically reduced to the single Gr\"uneisen parameter. Finally, this pressure propagates as an ultrasonic wave through the medium and is recorded at every point on a surface around the object as a function of time.

For a mathematical description of this measurement setup, we have to model the three different phenomena:
\begin{enumerate}
\item
The propagation of the laser light in the medium,
\item
the transformation of the absorbed energy into a pressure distribution, and
\item
the propagation of the pressure wave inside the medium.
\end{enumerate}

For the readers convenience we summarize the variables, which are used throughout this section, as a backup reference:
\begin{longtable}[c]{|l|l|c|}
\hline
Symbol & Quantity & Definition \\ \hline
 $\psi_{\bm \vartheta}$ & photon density & \eqref{eq:rtf0}\\
  $\Theta$ & phase function & \eqref{eq:rtf0}\\
 $\Phi_{\bm\vartheta}$ & energy fluence & \eqref{eq:fluence}\\
  $\bar \Phi$ & total energy fluence & \eqref{eq:tef}\\
  \hline
\caption{Radiative Transfer Equation}
\end{longtable}

The light propagation is usually modelled via the radiative transfer equation. Assuming that we have a laser pulse of fixed frequency $\nu>0$, the photon density $\psi_{\bm\vartheta}(t,\b x)$ at time $t\in\R$ and coordinate $\b x \in \R^3$ moving in the direction $\bm\vartheta\in \mathbbm{S}^2$ fulfils the equation
\begin{equation}
 \label{eq:rtf0}
 \frac1c\partial_t\psi_{\bm\vartheta}(t,\b x)+\left<\bm\vartheta,\nabla_{\b x}\psi_{\bm\vartheta}(t,\b x)\right>+(\mu_{\mathrm s}(\b x)+\mu_{\mathrm a}(\b x))\psi_{\bm\vartheta}(t,\b x) =\frac{\mu_{\mathrm s}(\b x)}{4\pi}\int_{\mathbbm{S}^2}\Theta(\b x,\bm{\tilde{\vartheta}},\bm\vartheta)\psi_{\bm\tilde{\vartheta}}(t,\b x)\d s(\bm{\tilde{\vartheta}}), 
\end{equation}
where $\mu_{\mathrm a}$ is the absorption coefficient and $\mu_{\mathrm s}$ is the scattering coefficient at the radiation 
frequency $\nu$, see for example~\cite{WanWu07}. Here, the constant $c$ denotes the speed of light and $\Theta$ is the phase function, that is $\Theta(\b x,\bm{\tilde{\vartheta}},\bm\vartheta)$ is the probability density that a photon heading into a direction $\bm{\tilde{\vartheta}}\in \mathbbm{S}^2$ is scattered at the position $\b x\in\R^3$ into the direction $\bm\vartheta\in \mathbbm{S}^2$.

Since in photoacoustic imaging the laser excitation occurs to be very short and the absorption of the light happens much faster 
than the propagation of the acoustic wave, the light distribution as a function of time is not relevant, but only the total energy 
absorbed at each point matters. Thus we turn our attention to the total energy fluence $\Phi_{\bm\vartheta}$ originating from photons 
moving in the direction $\bm\vartheta\in \mathbbm{S}^2$ as new variable:
\begin{equation}
\label{eq:fluence}
 \Phi_{\bm\vartheta}(\b x) = h\nu\int_{-\infty}^\infty\psi_{\bm\vartheta}(t,\b x)c\d t,\quad \b x\in\R^3,\;\bm\vartheta\in \mathbbm{S}^2, 
\end{equation}
where $h$ denotes the Planck constant. Then, $\Phi_{\bm\vartheta}$ solves the equation
\begin{equation}
\label{eq:rte}
\left<\bm\vartheta,\nabla_{\b x}\Phi_{\bm\vartheta}(t,\b x)\right>+(\mu_{\mathrm s}(\b x)+\mu_{\mathrm a}(\b x))\Phi_{\bm\vartheta}(t,\b x) = \frac{\mu_{\mathrm s}(\b x)}{4\pi}\int_{\mathbbm{S}^2}\Theta(\b x,\bm{\tilde{\vartheta}},\bm\vartheta)\Phi_{\bm\tilde{\vartheta}}(t,\b x)\d s(\bm{\tilde{\vartheta}}).
\end{equation}
However, since the phase function $\Theta$ is unknown, often the diffusion approximation
\[ \div(\epsilon\nabla\bar\Phi)(\b x)=\mu_{\mathrm a}(\b x)\bar\Phi(\b x), \]
with the diffusion coefficient $\epsilon$ given by 
\begin{equation}\label{eqTau}
\epsilon(\b x)=[3(\mu_{\mathrm a}(\b x)+(1-\tfrac13\Theta_1(\b x))\mu_{\mathrm s}(\b x)]^{-1},
\end{equation}
is used, which is an equation for the total energy fluence
\begin{equation}
\label{eq:tef}
\bar\Phi(\b x) = \frac1{4\pi}\int_{\mathbbm{S}^2}\Phi_{\bm\vartheta}(\bm x)\d s(\bm\vartheta).
\end{equation}
This is justified if $\Theta(\bm x,\bm{\tilde{\vartheta}},\bm\vartheta)\approx1+\Theta_1(x)\langle \bm{\tilde{\vartheta}},\bm\vartheta\rangle$ and $\Phi_{\bm\vartheta}(\bm x)\approx\bar\Phi(\b x)+\left<\bm\phi_1(\bm x),\bm\vartheta\right>$ for some functions $\Theta_1$ and $\bm\phi_1$, which is a good approximation if we have a strongly scattering medium, see \cite[Chapter XXI, \S5]{DauLio88f}.

The absorbed energy at a point $\b x$ is then given by $\mu_{\mathrm a}(\b x)\bar\Phi(\b x)$. Moreover, it is common to assume that the produced pressure density $p^{(0)}$ inside the medium is proportional to the absorbed energy where the proportionality coefficient is the Gr\"uneisen parameter $\gamma$:
\begin{equation}\label{eqInitialPressure}
p^{(0)}(\b x) = \gamma(\b x)\mu_{\mathrm a}(\b x)\bar\Phi(\b x).
\end{equation}

For the propagation of the acoustic wave, the standard assumption is that we have an elastic medium with mass density $\varrho$, bulk modulus $K$, and vanishing shear modulus. Then, according to linear elasticity theory, see for instance \cite{ColKre98}, we obtain the equation
\begin{equation}
\label{eq:wave}
 \partial_{tt}p(t,\b x) = K(\b x)\div_{\b x} (\tfrac1\varrho\nabla_xp)(t,\b x).
\end{equation}
Moreover, the density is usually considered to be approximatively constant throughout the medium, so that we may simplify this equation to the linear wave equation
\begin{equation}
\label{eq:wave2}
\partial_{tt}p(t,\b x) = c_{\mathrm s}^2(\b x)\Delta_xp(t,\b x)
\end{equation}
with the speed of sound $c_{\mathrm s}=\sqrt{\frac K\varrho}$.

Finally, we acquire the measurements
\begin{equation}
 \label{eq:g}
 g(t,\bm\xi) = p(t,\bm\xi),\quad t>0,\;\bm\xi\in\mathcal D 
\end{equation}
at some detector surface $\mathcal D\subset\R^3$.

Putting all this together, the inverse problem of photoacoustic imaging is to recover the material parameters $\mu_{\mathrm a}$, $\mu_{\mathrm s}$ (which in this approximation only enters via the diffusion coefficient $\epsilon$, see~\eqref{eqTau}), $\gamma$, and $c_{\mathrm s}$ from the given measurement data $g$.

This problem can be solved in two steps. First, we consider the acoustic part, given by the equation system
\begin{equation}
 \label{eq:wave3}
 \boxed{
 \begin{aligned}
\partial_{tt}p(t,\b x) &= c_{\mathrm s}^2(\b x)\Delta_xp(t,\b x),\qquad&&t>0,\;\b x\in\R^3, \\
\partial_tp(0,\b x)&=0,\qquad&&\b x\in\R^3, \\
p(t,\bm\xi)&=g(t,\bm\xi),\qquad&&t>0,\;\bm\xi\in\mathcal D, \\
p(0,\b x) &= p^{(0)}(\b x),\qquad&&\b x\in\R^3.
\end{aligned}}
\end{equation}
For given, non-trapping speed of sound $c_{\mathrm s}$ (in practice, a common assumption is that $c_{\mathrm s}$ is approximatively constant), the measurements $g$ allow to uniquely reconstruct the initial pressure~$p^{(0)}$, see \cite{AgrKuc07}. If the detector surface has a simple geometry (the easiest examples are a planar or a spherical detector) and the speed of sound is assumed to be constant, then there are a lot of explicit reconstruction formulas for $p^{(0)}$ available: \cite{FinHalRak07,FinPatRak04,Kun07b,Kun07,Kun11a,Nat11,Pal12,Pal14,XuWan02a,XuWan05,XuFenWan02,XuXuWan02}.

For unknown speed of sound, it was suggested in \cite{KirSch12} to use multiple photoacoustic measurements with a focussed illumination, see also \cite{ElbSch15,ElbSchSchu12} for reconstruction formulas for this kind of illumination. For a more extensive review of these works, we refer to \cite{KucSch15}.

So, let us assume that we can invert the acoustic problem and recover $p^{(0)}$ from the photoacoustic measurements. Then, as the second step, it remains the problem of reconstructing the material parameters $\mu_{\mathrm a}$, $\mu_{\mathrm s}$, $\gamma$ from the internal data $p^{(0)}$ via the relations
\begin{equation*}
\boxed{
\begin{aligned}
\div_{\b x}(\epsilon\nabla\bar\Phi)(\b x)&=\mu_{\mathrm a}(\b x)\bar\Phi(\b x),\qquad&&\b x\in\R^3, \\
\bar\Phi(\bm\xi) &= \bar\Phi^{(0)}(\bm\xi),\qquad&&\bm\xi\in\mathcal D, \\
p^{(0)}(\b x) &= \gamma(\b x)\mu_{\mathrm a}(\b x)\bar\Phi(\b x),\qquad&&\b x\in\R^3,
\end{aligned}}
\end{equation*}
where we assumed that we know the boundary data $\bar\Phi^{(0)}$ at the detector surface, since we are expected to know all the material parameters outside the object, in particular in the vicinity of the detector, and thus $p^{(0)}$ allows us to calculate $\bar\Phi$ around the detector surface. However, in practice, these data are not used, because the absorption is considered very small outside the object 
of interest. We are only using these data mathematically to be able to define the modelling equations in all $\R^3$, which is state of the art in this field.

In \cite{BalRen11a}, it was shown that even with multiple photoacoustic measurements, it is not possible to reconstruct all three parameters, but from the knowledge of one of the parameters (usually, the Gr\"uneisen parameter $\gamma$ is assumed to be approximatively constant, although there is no physical evidence for this) the other two can be explicitly calculated, see also \cite{Bal12} for a review about the reconstruction from internal data.

Thus, the inverse photoacoustic problem is not well-posed, as it requires an additional three dimensional data to allow the reconstruction of all material parameters.

\section{Optical Coherence Tomography} \label{sec:oct}
Opposed to photoacoustic imaging, optical coherence tomography is not a hybrid imaging technique. It visualises the back-scattered light of an object. However, analogously to photoacoustic imaging, the object is illuminated with a short laser pulse in the visible or near infrared spectrum.
In practical realisations, the illumination is triggered to produce a plane wave, and the scattered wave is measured at a 
planar detector parallel to the plane wave illumination.

Since the time resolution of the detectors, which would be required for the desired spatial resolution, is practically hard to achieve, the detection of the reflected wave is accomplished by superimposing it with a reference beam which is a copy of the wave used for illumination (produced by a beam splitter from the original wave) sent to a mirror instead of to the object. Then, the total intensity of this superposition is measured at every point on the detection plane.

For the readers convenience we summarize the variables, which are used throughout this section, as a backup reference:
\begin{longtable}[c]{|l|l|c|}
\hline
Symbol & Quantity & Definition \\ \hline
$\bm e$ & microscopic electric field & \eqref{eqMaxwell1}\\
 $\rho$ & microscopic charge density & \eqref{eqMaxwell1}\\
 $\bm b$ & microscopic magnetic field & \eqref{eqMaxwell2}\\
 $\bm j$ & microscopic electric current & \eqref{eqMaxwell4}\\
 $\bm E$ & averaged electric field & \eqref{eqMaxwellMacro1}\\
  $\bar \rho$ & averaged charge density & \eqref{eqMaxwellMacro1}\\
 $\bm B$ & averaged magnetic field & \eqref{eqMaxwellMacro2}\\
 $\bar{\bm j}$ & averaged electric current & \eqref{eqMaxwellMacro4}\\
  $\tilde \rho$ & approximation of $\bar \rho$ & \eqref{eqq_rho1}\\
 $\bm J$ & approximation of $\bar{\bm j}$ & \eqref{eqq_rho2}\\
  $\bm P$ & electric polarization  & \eqref{eqPolarisationElectric}\\
   $\bm M$ & magnetic polarization  & \eqref{eqPolarisationMagnetic}\\
    ${\bm E}_s$ & stationary electric field & \eqref{eq:Maxwells}\\
 ${\bm B}_s$ & stationary magnetic field & \eqref{eq:Maxwells}\\
 ${\bm j}_s$ & stationary electric current & \eqref{eq:Maxwells}\\
 $\rho_s$ & stationary charge density & \eqref{eq:Maxwells}\\
  $\mathcal E$ & electrostatic potential energy & \eqref{eq:me}\\
 $\mathcal W$ & total electromagnetic energy density & \eqref{eq:ws}\\
 $\b S$ & Poynting vector & \eqref{eq:ws}\\
 $\b D$ & electric displacement & \eqref{eq:electric_displacement}\\
 \hline
\caption{Electromagnetic Waves}
\end{longtable}
\subsection{Microscopic Maxwell's equations}

The interaction of the laser pulse, which is an electromagnetic wave, and the medium is best described by Maxwell's microscopic equations. 
These are equations for the time evolution of the microscopic electric and magnetic fields $\b e$ and $\b b$ for given microscopic charge density $\rho$ and microscopic electric current $\b j$:
\begin{subequations}\label{eqMaxwell}
\begin{alignat}{2}
\div_{\b x} \b e(t,\b x) &= 4\pi\rho(t,\b x),\quad &&t\in\R,\;\b x\in\R^3,\label{eqMaxwell1} \\
\div_{\b x} \b b(t,\b x) &= 0,\quad &&t\in\R,\;\b x\in\R^3,\label{eqMaxwell2} \\
\curl_{\b x} \b e(t,\b x) &= -\frac1c\partial_t \b b(t,\b x),\quad &&t\in\R,\;\b x\in\R^3, \label{eqMaxwell3} \\
\curl_{\b x} \b b(t,\b x) &= \frac1c\partial_t \b e(t,\b x)+\frac{4\pi}c \b j(t,\b x),\quad &&t\in\R,\;\b x\in\R^3. \label{eqMaxwell4}
\end{alignat}
\end{subequations}
\begin{remark}
If the initial data $\b e(0,\cdot)$ and $\b b(0,\cdot)$ are compatible (that is, they are satisfying the equations 
\eqref{eqMaxwell1} and \eqref{eqMaxwell2} at $t=0$) and the continuity equation, 
\begin{equation}\label{eqChargeConservation}
\partial_t\rho(t,\b x)+\div_{\b x}\b j(t,\b x) = 0,\quad t\in\R,\,\b x\in\R^3,
\end{equation}
holds, then the equations \eqref{eqMaxwell3} and \eqref{eqMaxwell4} imply \eqref{eqMaxwell1} and \eqref{eqMaxwell2}. 

To see this, take the divergence of the two vector equations \eqref{eqMaxwell3} and \eqref{eqMaxwell4}. Then, using the identity $\div_{\b x} \curl_{\b x} \b f = 0$ for every vector field $\b f$, it follows with \eqref{eqChargeConservation} that 
\[ \div_{\b x} \partial_t \b b = 0\quad\text{and}\quad\div_{\b x} \partial_t \b e = -4 \pi \div_{\b x} \b j = 4\pi\partial_t\rho. \]
Integrating these equations over time gives the equations \eqref{eqMaxwell1} and \eqref{eqMaxwell2}.
\end{remark}

To obtain a formula for the evolution of the charge density $\rho$ and the electric current $\b j$, we consider a medium which consists of
\begin{itemize}
 \item $N$ charged particles
 \item  at the positions $\b x_i:\R\to\R^3$, $i=1,\ldots,N$, as functions of time
 \item with masses $m_i$ and 
 \item charge densities $q_i\rho_i$, where $q_i\in\R$ and $\rho_i\in C^\infty_{\mathrm c}(\R^3)$ are non-negative functions with $\|\rho_i\|_{L^1}=1$. Because these particles have physical radii smaller than \SI{e-14}{\metre}, $\rho_i$ can be considered an approximation of a $\delta$-distribution.
\end{itemize} 
Then, the charge density $\rho$ and the electric current $\b j$ are given by
\begin{subequations}\label{eqChargeAndCurrent}
\begin{align}
\rho(t,\b x) &= \sum_{i=1}^Nq_i\rho_i(\b x-\b x_i(t)),\quad\text{and}\label{eqChargeDensity} \\
\b j(t,\b x)&=\sum_{i=1}^Nq_i \b x_i'(t)\rho_i(\b x-\b x_i(t)).\label{eqCurrent}
\end{align}
\end{subequations}
Clearly, they fulfil the continuity equation \eqref{eqChargeConservation}.

For the motion of the particles, we assume that they move according to the Lorentz force:
\begin{equation}\label{eqLorentzForce}
m_i \b x_i''(t) = q_i\int_{\R^3}\b e(t,\b x)\rho_i(\b x-\b x_i(t))\d \b x+q_i\frac{\b x_i'(t)}c\times\int_{\R^3}\b b(t,\b x)\rho_i(\b x-\b x_i(t))\d \b x.
\end{equation}
The equations \eqref{eqMaxwell}, \eqref{eqChargeAndCurrent}, and \eqref{eqLorentzForce} completely define the evolution of the medium starting from initial conditions $\b x_i(0)$ and $\b x_i'(0)$, $i=1,\ldots,N$ for the particles and $\b e(0,\b x)$ and $\b b(0,\b x)$, $\b x\in\R^3$, for the microscopic electromagnetic fields, see for example \cite[Chapter 6]{Jac98}.

\subsection{Macroscopic Maxwell's equations}

However, solving the huge system of equations \eqref{eqLorentzForce} is practically impossible.
Therefore, we average the fields with respect to the space variable by some non-negative weighting 
function $w\in C^\infty_{\mathrm c}(\R^3)$ with support around $0$ and $\|w\|_{L^1}=1$:
\begin{align*}
\b E(t,\b x) &= \int_{\R^3}w(\b y-\b x)\b e(t,\b y)\d \b y, & \b B(t,\b x) &= \int_{\R^3}w(\b y-\b x)\b b(t,\b y)\d \b y, \\
\j(t,\b x) &= \int_{\R^3}w(\b y-\b x)\b j(t,\b y)\d \b y,&\bar\rho(t,\b x) &= \int_{\R^3}w(\b y-\b x)\rho(t,\b y)\d \b y.
\end{align*}
Then, taking the average of the equation system \eqref{eqMaxwell} with respect to the function $w$, we obtain Maxwell's macroscopic equations
\begin{subequations}\label{eqMaxwellMacro}
\begin{alignat}{2}
\div_{\b x} \b E(t,\b x) &= 4\pi \bar\rho (t,\b x),\quad &&t\in\R,\;\b x\in\R^3,\label{eqMaxwellMacro1} \\
\div_{\b x} \b B(t,\b x) &= 0,\quad &&t\in\R,\;\b x\in\R^3,\label{eqMaxwellMacro2} \\
\curl_{\b x} \b E(t,\b x) &= -\frac1c\partial_t \b B(t,\b x),\quad &&t\in\R,\;\b x\in\R^3,\label{eqMaxwellMacro3} \\
\curl_{\b x} \b B(t,\b x) &= \frac1c\partial_t \b E(t,\b x)+\frac{4\pi}c\j (t,\b x),\quad &&t\in\R,\;\b x\in\R^3,\label{eqMaxwellMacro4}
\end{alignat}
\end{subequations}
for the averaged electromagnetic fields $\b E$ and $\b B$ depending on the averaged charge density $\bar\rho$ and the averaged electric current $\j$.

\begin{remark}\label{rePhysicalNotation}
Since the medium usually consists of molecules, most of the particles are clustered around the centers of these molecules. Let us relabel the particles $(\b x_i)_{i=1}^N$ therefore as $((\b x_{i,k})_{i=1}^{I_k})_{k=1}^K$ such that $\b x_{i,k}$ is the $i$-th particle of the $k$-th cluster of particles, $i=1,\ldots,I_k$, $k=1,\ldots,K$, write~$q_{i,k}$ for its electric charge, and let $\bar{\b x}_k$ be the mass center of the $k$-th molecule. 

Then, we introduce the zeroth order approximations $\tilde\rho$ and $\b J$ of the charge density $\bar\rho$ and the electric current $\j$, respectively, which are obtained from the averaged fields $\bar\rho$ and $\j$ when the molecules are assumed to be point like, that is, setting $\b x_{i,k}=\bar{\b x}_k$ for all $i$:
\begin{subequations}\label{eqq_rho}
\begin{align}
\tilde\rho(t,\b x) &= \sum_{k=1}^Kw(\bar{\b x}_k(t)-\b x)\sum_{i=1}^{I_k}q_{i,k},\label{eqq_rho1} \\
\b J(t,\b x) &= \sum_{k=1}^K\bar{\b x}_k'(t)w(\bar{\b x}_k(t)-\b x)\sum_{i=1}^{I_k}q_{i,k}.\label{eqq_rho2}
\end{align}
\end{subequations}
We note that $\tilde\rho$ and $\b J$ also fulfil the continuity equation
\begin{equation}\label{eqContinuityZerothOrd}
\partial_t\tilde\rho(t,\b x)+\div_x\b J(t,\b x)=0.
\end{equation}
We now write the deviation $\bar\rho-\tilde\rho$ of the averaged charge density from its zeroth order approximation~$\tilde\rho$ 
as the divergence of some function $\b P:\R\times\R^3\to\R^3$, the so-called electric polarisation:
\begin{equation}\label{eqPolarisationElectric}
\bar\rho(t,\b x) = \tilde\rho(t,\b x)-\div_{\b x} \b P(t,\b x).
\end{equation}
According to Helmholtz's theorem this is always possible, but defines $\b P$ only up to the addition of a divergence-free vector field.

Then, we realise that we have with the continuity equations \eqref{eqChargeConservation} and \eqref{eqContinuityZerothOrd} and the definition \eqref{eqPolarisationElectric} 
\[ \div_{\b x}(\j-\b J-\partial_t \b P) = -\partial_t(\bar\rho-\tilde\rho+\div_{\b x}\b P) = 0. \]
Doing the Helmholtz decomposition of the vector field $\j-\b J-\partial_t \b P$, we therefore find a function $\b M:\R\times\R^3\to\R^3$, the magnetic polarisation, 
(uniquely defined up to a gradient vector field) so that
\begin{equation}\label{eqPolarisationMagnetic}
\j(t,\b x) = \b J(t,\b x)+\partial_t \b P(t,\b x)+c\curl_{\b x} \b M(t,\b x).
\end{equation}

These decompositions \eqref{eqPolarisationElectric} and \eqref{eqPolarisationMagnetic} of $\bar\rho$ and $\j$ into macroscopic ($\tilde\rho$ and $\b J$) and polarisation ($\b P$ and $\b M$) parts are standard in the physics literature to formulate Maxwell's macroscopic equations, see for example \cite[Chapter 6.6]{Jac98}.
\end{remark}

Clearly, the equation system \eqref{eqMaxwellMacro} is not enough to determine the averaged fields $\b E$ and $\b B$, since we would still require a solution of the microscopic problem to obtain the averaged quantities $\bar\rho$ and $\j$. The common way around this is to impose heuristic relations between these functions and the fields $\b E$ and $\b B$. A common choice for biological tissue is a linear relationship.
\begin{definition}\label{deDielectricum}
We call a material where
\begin{equation}\label{eqDielectricum}
\j(t,\b x) = \int_{-\infty}^\infty\mu(\tau,\b x)\b E(t-\tau,\b x)\d\tau
\end{equation}
for some function $\mu\in C^\infty_{\mathrm c}(\R\times\R^3)$ with $\mu(\tau,\b x)=0$ for all $\tau<0$, $\b x\in\R^3$ a non-magnetic, isotropic, linear dielectric medium.
\end{definition}

For a non-magnetic, isotropic, linear dielectric medium, we may solve the equation system \eqref{eqMaxwellMacro} for given parameter $\mu$ for $\b E$ and $\b B$.

\begin{remark}\label{remark34}
In the standard notation introduced in \autoref{rePhysicalNotation}, see in particular \eqref{eqPolarisationElectric} and~\eqref{eqPolarisationMagnetic}, the assumptions corresponding to \eqref{eqDielectricum} are such that no magnetic polarisation exists in the medium, that is $\b M=0$, and that the macroscopic current $\b J$ and the electric polarisation $\b P$ are of the form
\begin{equation}\label{eqSusc}
 \begin{aligned}
  \b J(t,\b x) &= \int_{-\infty}^\infty\sigma(\tau ,\b x)\b E(t-\tau,\b x)\d \tau, \\
\b P(t,\b x) &= \int_{-\infty}^\infty\chi(\tau ,\b x)\b E(t-\tau,\b x)\d \tau,
 \end{aligned}
\end{equation}
where $\sigma\in C^\infty_{\mathrm c}(\R\times\R^3)$ is called the conductivity and $\chi\in C^\infty_{\mathrm c}(\R\times\R^3)$ is the electric susceptibility.

By \eqref{eqPolarisationMagnetic}, the relation between the function $\mu$, defined in \eqref{eqDielectricum}, and the pair $\sigma$ and $\chi$ is simply
\[ \mu(t,\b x) = \sigma(t,\b x)+\partial_t\chi(t,\b x). \]
\end{remark}

\subsection{Problem formulation}

Initially, the electromagnetic fields should describe a laser pulse which did not yet interact with the medium. Thus, we want to look for a solution $(\b E,\b B)$ which for $t<0$ is the superposition of a solution $(\b E^{(0)},\b B^{(0)})$ of the vacuum equations (that is a solution of \eqref{eqMaxwellMacro} with $\bar\rho=0$ and $\j=0$) with support outside the medium (that is outside the support of $\mu(\tau,\cdot)$ for every $\tau\in\R$) and a stationary solution $(\b E_{\textrm s},\b B_{\textrm s})$ produced by the undisturbed medium (we want to assume that without exterior influence, the electric and magnetic fields generated by the medium do not vary in time).

\begin{proposition}\label{thElectricFieldSolMacro}
Let $\Omega\subset\R^3$ be an open set. For given functions $\rho_{\textrm s}\in C^\infty_{\mathrm c}(\R^3)$ and $\b j_{\textrm s}\in C^\infty_{\mathrm c}(\R^3;\R^3)$ with $\supp\rho_{\textrm s}\subset\Omega$, $\supp \b j_{\textrm s}\subset\Omega$, and $\div\b j_{\textrm s}=0$, we define the stationary electromagnetic fields $\b E_{\textrm s},\b B_{\textrm s}\in C^\infty(\R^3;\R^3)$ by
\begin{equation}\label{eq:Maxwells}
\begin{aligned}
\div\b E_{\textrm s} (\b x) &= 4\pi\rho_{\textrm s} (\b x),\qquad &\div\b B_{\textrm s} (\b x) &= 0, \\
\curl\b E_{\textrm s} (\b x) &= 0, \qquad &\curl \b B_{\textrm s} (\b x) &= \frac{4\pi}c\b j_{\textrm s} (\b x).
\end{aligned}\end{equation}

Moreover, let $\b E^{(0)}$ be a solution of the wave equation
\begin{equation}\label{eqWaveInc}
\frac1{c^2}\partial_{tt}\b E^{(0)}(t,\b x) = \Delta_{\b x} \b E^{(0)}(t,\b x),\qquad t\in\R,\;\b x\in\R^3,
\end{equation}
with $\div_x\b E^{(0)}=0$, $\b E^{(0)} (\cdot,\b x)\in L^1(\R)$ for every $x\in\R^3$, and fulfilling that
\begin{equation}\label{eqIntersect}
\supp \b E^{(0)}(t,\cdot) \cap \Omega = \emptyset \text{ for every }t\le0;
\end{equation}

Finally, let $\mu\in C^\infty_{\mathrm c}(\R\times\R^3)$ fulfil $\supp\mu(t,\cdot)\subset\Omega$ for all $t\in\R$, $\mu(t,\b x)=0$ for all $t\le0$, $\b x\in\R^3$, and
\begin{equation}\label{eqStationaryCurrent}
\j_{\textrm s}(\b x)=\b E_{\textrm s}(\b x)\int_0^\infty\mu(t,\b x)\d t\quad\text{for all}\quad\b x\in\R^3.
\end{equation}

Then, there exists a solution $\b E$, $\b B$ of Maxwell's macroscopic equations \eqref{eqMaxwellMacro} for a non-magnetic, isotropic, linear dielectric medium with the parameter $\mu$, as in \autoref{deDielectricum}, and with charge density $\bar\rho(t,\b x)=\rho_{\textrm s}(\b x)-\int_{-\infty}^t\div_{\b x}\bar j(\tau,\b x)\d\tau$ such that we have for all $t\le0$
\begin{align}
\b E(t,\b x) &= \b E^{(0)}(t,\b x)+\b E_{\textrm s}(\b x) \label{eqElectricFieldInitial} \\
\b B(t,\b x)&=-c\int_{-\infty}^t\curl_{\b x}\b E^{(0)}(\tau,\b x)\d\tau+\b B_{\textrm s}(\b x). \label{eqMagneticFieldInitial}
\end{align}

In particular, $\b E$ is the solution of the differential equation
\begin{equation}\label{eqElectricField}
\frac1{c^2}\partial_{tt} \b E(t,\b x)+\curl_{\b x}\curl_{\b x} \b E(t,\b x) = -\frac{4\pi}{c^2}\int_{-\infty}^\infty\partial_t\mu(\tau,\b x)\b E(t-\tau,\b x)\d\tau
\end{equation}
together with the initial condition \eqref{eqElectricFieldInitial}, which is equivalent to the integral equation
\begin{equation}\label{eqElectricFieldMacroIntegral}
\b E(t,\b x) = \b E^{(0)}(t,\b x) + \b E_{\textrm s} (\b x)-\int_{\R^3}\int_{-\infty}^{t-\frac{|\b y|}c}\frac1{|\b y|}\left(\grad_{\b x}\div_{\b x}-\frac1{c^2}\partial_{tt}\right)\j(\tau,\b x+\b y)\d\tau\d \b y,
\end{equation}
where $\j$ is given by \eqref{eqDielectricum}.
\end{proposition}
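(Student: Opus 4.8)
The plan is to collapse the coupled first-order system \eqref{eqMaxwellMacro} into a single second-order equation for $\b E$ alone, to solve that equation by convolution with the retarded fundamental solution of the wave operator, and to recover $\b B$ afterwards by integrating Faraday's law \eqref{eqMaxwellMacro3} in time. Concretely I would proceed in four steps: (i) derive \eqref{eqElectricField} from \eqref{eqMaxwellMacro} together with the constitutive law \eqref{eqDielectricum}; (ii) recast \eqref{eqElectricField} with the past condition \eqref{eqElectricFieldInitial} as the integral equation \eqref{eqElectricFieldMacroIntegral}; (iii) solve this self-consistent integral equation by a fixed-point argument exploiting the causality of $\mu$; and (iv) define $\b B$ and check that $(\b E,\b B)$ satisfies all of \eqref{eqMaxwellMacro} with the prescribed charge density and the stated behaviour for $t\le0$.

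For step (i) I would apply $\curl_{\b x}$ to \eqref{eqMaxwellMacro3} and substitute $\curl_{\b x}\b B$ from \eqref{eqMaxwellMacro4}, which yields $\tfrac1{c^2}\partial_{tt}\b E+\curl_{\b x}\curl_{\b x}\b E=-\tfrac{4\pi}{c^2}\partial_t\j$. Inserting $\j=\mu*\b E$ from \eqref{eqDielectricum} and integrating by parts in the convolution variable $\tau$ (the boundary terms vanish since $\mu(\cdot,\b x)$ has compact support) converts $\partial_t\j$ into $\int\partial_\tau\mu(\tau,\b x)\b E(t-\tau,\b x)\d\tau$, which is exactly \eqref{eqElectricField}.

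For steps (ii)–(iii) I would use $\curl_{\b x}\curl_{\b x}=\grad_{\b x}\div_{\b x}-\Delta_{\b x}$ to rewrite \eqref{eqElectricField} as $(\tfrac1{c^2}\partial_{tt}-\Delta_{\b x})\b E=-\tfrac{4\pi}{c^2}\partial_t\j-\grad_{\b x}\div_{\b x}\b E$, and eliminate $\div_{\b x}\b E$ via $\div_{\b x}\b E=4\pi\bar\rho$ and the continuity equation, so that the entire right-hand side is expressed through $\j$ and the static datum $\rho_{\textrm s}$. Convolving with the retarded fundamental solution $\tfrac{1}{4\pi|\b y|}\delta(t-\tfrac{|\b y|}c)$ of the wave operator, and absorbing the static part into $\b E_{\textrm s}$ (which the wave operator annihilates up to the $\grad_{\b x}\rho_{\textrm s}$ term, since $\curl_{\b x}\b E_{\textrm s}=0$) and the incident part into $\b E^{(0)}$ (exactly annihilated by \eqref{eqWaveInc}), produces the representation \eqref{eqElectricFieldMacroIntegral}, with the $\tau$-integration up to the retarded time $t-\tfrac{|\b y|}c$ arising from the time integral of the continuity relation; the precise signs and $4\pi$-factors are routine bookkeeping. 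Since $\j=\mu*\b E$ depends linearly on the unknown, \eqref{eqElectricFieldMacroIntegral} is a linear equation for $\b E$, and because $\mu(\tau,\cdot)=0$ for $\tau\le0$ it is of Volterra type in time; finite propagation speed together with $\supp\mu(\tau,\cdot)\subset\Omega$ confines the coupling to a bounded region on every finite time interval. I would therefore run a Banach fixed-point argument on $C([0,T];X)$ for an appropriate space $X$ and small $T$, and extend to all $t$ by iterating the Volterra step, obtaining existence and uniqueness.

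The remaining verifications are comparatively routine. For $t\le0$ the support condition \eqref{eqIntersect} forces $\b E^{(0)}(t-\tau,\cdot)$ to vanish on $\Omega$ whenever $\tau\ge0$, so $\j(t,\b x)=\b E_{\textrm s}(\b x)\int_0^\infty\mu(\tau,\b x)\d\tau=\b j_{\textrm s}(\b x)$ by \eqref{eqStationaryCurrent}; hence for $t\le0$ the kernel in \eqref{eqElectricFieldMacroIntegral} reproduces the static field and \eqref{eqElectricFieldInitial} holds, while integrating \eqref{eqMaxwellMacro3} from $t=-\infty$ gives \eqref{eqMagneticFieldInitial}. Defining $\b B$ by this time integral makes \eqref{eqMaxwellMacro2} and \eqref{eqMaxwellMacro3} automatic, \eqref{eqMaxwellMacro4} follows by differentiating in $t$ and comparing with \eqref{eqElectricField} (the time-independent term being pinned down as $t\to-\infty$), and \eqref{eqMaxwellMacro1} holds with the stated $\bar\rho$ by continuity. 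The main obstacle is step (iii): making the retarded-potential inversion rigorous and selecting function spaces on the unbounded domain $\R^3$ in which the composition of the singular retarded potential with the convolution $\mu*(\cdot)$ is both well-defined and contractive, where the compact support of $\mu$, its causality, and finite speed of propagation are precisely what make the argument go through. A secondary subtlety is that the decompositions \eqref{eqPolarisationElectric}–\eqref{eqPolarisationMagnetic} fix $\b P$ and $\b M$ only up to divergence- and gradient-free fields, so one must confirm that the construction does not depend on this gauge freedom.
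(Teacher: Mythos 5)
Your proposal follows essentially the same route as the paper: combine \eqref{eqMaxwellMacro3} and \eqref{eqMaxwellMacro4} into the second-order equation \eqref{eqElectricField}, pass to the retarded-potential integral equation via the representation of \autoref{thElectricFieldSol}, and verify the stationary past behaviour by showing $\j(t,\cdot)=\b j_{\textrm s}$ for $t\le0$ from \eqref{eqIntersect} and \eqref{eqStationaryCurrent}. The only differences are that you additionally sketch a Volterra-type fixed-point argument for existence, which the paper leaves implicit, and you flag a gauge-freedom issue for $\b P$ and $\b M$ that is actually irrelevant here since the proposition is formulated entirely in terms of $\j$ and $\mu$.
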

\begin{proof}
We first check that \eqref{eqElectricFieldInitial} and \eqref{eqMagneticFieldInitial} solve Maxwell's equations \eqref{eqMaxwellMacro} for $t\le 0$. We have directly by construction
\[ \div_{\b x}\b E(t,\b x) = 4\pi\rho_{\textrm s}(\b x)\quad\text{and}\quad\div_{\b x}\b B(t,\b x) = 0, \]
which are the equations \eqref{eqMaxwellMacro1} and \eqref{eqMaxwellMacro2}. Moreover, we have \eqref{eqMaxwellMacro3}, since
\[ \partial_t\b B(t,\b x) = -c \curl_{\b x}\b E^{(0)}(t,\b x) = -c \curl_{\b x}\b E(t,\b x). \]
Finally, we get
\[\frac1c \partial_t\b E(t,\b x) = \frac1c \int_{-\infty}^t\partial_{tt}\b E^{(0)}(\tau,\b x)\d\tau = c \int_{-\infty}^t\Delta_{\b x}\b E^{(0)}(\tau,\b x)\d\tau. \]
Since $\div_{\b x}\b E^{(0)}=0$, we get with the identity $\curl_{\b x}\curl_{\b x} \b F = \grad_{\b x} \div_{\b x} \b F - \Delta_{\b x} \b F$ for every function $\b F\in C^2(\R^3;\R^3)$ that
\begin{align*}
\frac1c \partial_t\b E(t,\b x) &= -c\int_{-\infty}^t\curl_{\b x}\curl_{\b x}\b E^{(0)}(\tau,\b x)\d\tau \\
&= \curl_{\b x}\b B(t,\b x)-\curl_{\b x}\b B_{\textrm s}(\b x) = \curl_{\b x}\b B(t,\b x)-\frac{4\pi}c\b j_{\textrm s}(\b x).
\end{align*}
It remains to check that the current $\j$, defined by relation \eqref{eqDielectricum}, indeed fulfils $\j(t,x)=\b j_{\textrm s}(\b x)$ for $t\le0$. Using that $\mu(t,\b x)=0$ for $t\le0$, condition \eqref{eqIntersect}, and \eqref{eqStationaryCurrent}, we find
\begin{align*}
\int_{-\infty}^\infty\mu(t-\tau,\b x)\b E(\tau,\b x)\d\tau &= \int_{-\infty}^0\mu(t-\tau,\b x)(\b E^{(0)}(t,\b x)+\b E_{\textrm s}(\b x))\d\tau \\
&= \b E_{\textrm s}(\b x)\int_{-\infty}^0\mu(t-\tau,\b x)\d\tau = \b j_{\textrm s}(\b x).
\end{align*}
Thus, we have shown that \eqref{eqElectricFieldInitial} and \eqref{eqMagneticFieldInitial} solve Maxwell's equations \eqref{eqMaxwellMacro} for $t\le 0$.

Combining Maxwell's macroscopic equations \eqref{eqMaxwellMacro3} and \eqref{eqMaxwellMacro4}, 
it follows that $\b E$ in particular solves the equation
\[ \frac1{c^2}\partial_{tt}\b E(t,\b x)+\curl_{\b x}\curl_{\b x} \b E(t,\b x) = -\frac{4\pi}{c^2}\partial_t \j (t,\b x) 
\text{ for all } t \in \R\,, x \in \R^3. \]

To derive the integral equation, we use that $\b E-\b E_{\textrm s}$, $\b B-\b B_{\textrm s}$ is a solution of \eqref{eqMaxwellMacro} with $\bar\rho$ replaced by $\bar\rho-\rho_{\textrm s}$ and $\j$ by $\j-\b j_{\textrm s}$. Therefore, according to the representation~\eqref{eqElectricFieldSol} of the solution derived in \autoref{thElectricFieldSol}, we find for $\b E$ that
\begin{align*}
\b E(t,\b x)&=\b E_{\textrm s}(\b x)+\b E^{(0)}(t,\b x)-\frac1{c^2t}\int_{\partial B_{ct}(\b x)}(\j(0,\b y)-\b j_{\textrm s}(\b y))\d s(\b y) \\
&-\int_{B_{ct}(0)}\frac1{|\b y|}\left(\grad_{\b x}\bar\rho(t-\tfrac{|\b y|}c,\b x+\b y)-\grad\rho_{\textrm s}(\b x+\b y)+\frac1{c^2}\partial_t \j(t-\tfrac{|\b y|}c,\b x+\b y)\right)\d \b y.
\end{align*}
Using that $\j(t,\b x)=\b j_{\textrm s}(\b x)$ for $t\le0$ and $\bar\rho(t,\b x)=\rho_{\textrm s}(\b x)-\int_{-\infty}^t\div_{\b x}\bar j(\tau,\b x)\d\tau$, we find that
\begin{align*}
\b E(t,\b x)-\b E_{\textrm s}(\b x) &= \b E^{(0)}(t,\b x)+\int_{\R^3}\int_{-\infty}^{t-\frac{|\b y|}c}\frac1{|\b y|}\grad_{\b x}\div_{\b x}\j(\tau,\b x+\b y)\d\tau\d \b y \\
&-\frac1{c^2}\int_{\R^3}\frac1{|\b y|}\partial_t \j(t-\tfrac{|\b y|}c,\b x+\b y)\d \b y.
\end{align*}
\end{proof}

\subsection{Measurements}
To simplify the calculations, we assume that we the medium satisfies $\b E_{\textrm s} =0$ and $\b B_{\textrm s}=0$ 
in a domain $\Omega$.

Next, we want to model the measurements of optical coherence tomography. We consider illuminating waves of 
the form
\begin{equation}
 \label{eq:ill}
 \b E^{(0)}(t,\b x)=f(t+\tfrac{ x_3}c)\bm\eta
\end{equation}
with a function $f\in C^\infty_{\mathrm c}(\R)$ and a fixed polarisation vector $\bm\eta\in\R^2\times\{0\}$, where we choose 
$f$ such that $\b E^{(0)}$ fulfils the condition \eqref{eqIntersect}. 

Moreover, we place point detectors everywhere on the detector surface 
\begin{equation}
\label{eq:surface}
\mathcal D=\R^2\times\{d\}
\end{equation}
with $d>0$ sufficiently large, in particular, we require that $\b E^{(0)}(t,\b\xi)=0$ for $t\ge0$ and $\b\xi\in\mathcal D$. For more details, we refer to \cite[Equation (29)]{ElbMinSch15}.

Then, the reference wave $\b E^z$ produced by reflecting this incoming wave at a perfect mirror placed in the plane given by the equation $x_3=z$ is given by
\begin{equation}\label{eqReflectedField}
\b E^z(t,\b x) = \begin{cases}\big(f(t+\frac{x_3}c)-f(t+ \frac{x_3}c + 2\,\frac{z-x_3}c) \big)\bm\eta&\text{if}\;x_3>z,\\0&\text{if}\;x_3\le z.\end{cases}
\end{equation}
The measurements done in optical coherence tomography are now the intensities
\begin{equation}\label{eqIntensity}
I_j(z,\b\xi)=\int_0^\infty|E_j(t,\b\xi)+E^z_j (t,\b\xi)|^2\d t,\quad\b\xi\in\mathcal D,\;j\in\{1,2,3\},
\end{equation}
on the detector surface $\mathcal D$.

We combine these measurements to
\begin{equation}\label{eqEffectiveMeasurements}
\tilde I_j(z,\b\xi) =  \frac12\left(I_j(z,\b\xi)-\int_0^\infty|E_j(t,\b\xi)|^2\d t -\int_0^\infty|E^z_j (t,\b\xi)|^2\d t\right),
\end{equation}
where the second term can be obtained by measuring the back-scattered wave without superimposing the reflected wave $\b E^z$, and the last term is explicitly known from the initial laser pulse~$\b E^{(0)}$. Equation \eqref{eqEffectiveMeasurements} results to
\[
\tilde I_j(z,\b x) = \int_{0}^\infty E_j (t,\b x) E^z_j (t,\b x)\d t .
\]
Since we have by our choice of measurement setup that $\b E(t,\b x)=\b E^{(0)}(t,\b x)$ for $t\le0$ and $\b E^{(0)}(t,\b\xi)=0$ for $t>0$ and $\b\xi\in\mathcal D$, we can write this in the form
\begin{align*}
\tilde I_j(z,\bm\xi) &= \int_{0}^\infty(E_j-E_j^{(0)})(t,\bm\xi)(E^z_j -E_j^{(0)})(t,\bm\xi)\d t \\
&= \int_{-\infty}^\infty(E_j-E_j^{(0)})(t,\bm\xi)(E^z_j -E_j^{(0)})(t,\bm\xi)\d t.
\end{align*}
Inserting the explicit formula \eqref{eqReflectedField} for the reflected wave $\b E^z$, we get
\begin{equation}\label{eq:ift}
\tilde I_j(z,\bm\xi) = -\eta_j\int_{-\infty}^\infty(E_j-E_j^{(0)})(t,\bm\xi)f(t + \tfrac{2z-\bm\xi_3}c)\d t.
\end{equation}

We use the convention
\begin{equation}
 \label{eq:fourier}
 \mathcal FF(\omega) = \int_{-\infty}^\infty F(t)\e^{\i\omega t}\d t
\end{equation}
for the Fourier transform of an integrable function $F\in L^1(\R)$ with respect to time, and we put a subindex at $\mathcal F$ if we need to specify the variable with respect to which we do the Fourier transform.

Taking the inverse Fourier transform with respect to $z$ in \eqref{eq:ift} and using Plancherel's formula we get
\[
 \frac2c \int_{-\infty}^\infty \tilde I_j(z,\bm\xi) \e^{-\i \tfrac{\omega}c (2z- \bm\xi_3)} \d z = -\eta_j (\mathcal F_t E_j-\mathcal F_t E_j^{(0)})(\omega,\bm\xi) \mathcal F f(-\omega ).
\]
Provided that the Fourier transform of the function $f$ is nowhere zero and that the polarisation~$\bm\eta$ fulfils $\eta_1\ne0$ and $\eta_2\ne0$, we obtain from this the data
\[ \b h(t,\bm\xi) = \begin{pmatrix}E_1(t,\bm\xi)\\E_2(t,\bm\xi)\end{pmatrix}, \]
where $\b h$ can be directly calculated from the measurements $\tilde I_j$ and the knowledge of the initial wave $\b E^{(0)}$:
\begin{equation}\label{eqh}
\mathcal F_t h_j (\omega ,\bm\xi) = \mathcal F_t E_j^{(0)}(\omega,\bm\xi) - \frac2{c \eta_j \mathcal F f(-\omega )} \int_{-\infty}^\infty \tilde I_j(z,\bm\xi) \e^{-\i \tfrac{\omega}c (2z- \bm\xi_3)} \d z .
\end{equation}

The inverse problem of optical coherence tomography is now to find the material parameter $\mu:\R\times\R^3\to\R$ from the function $\b h:\R\times\mathcal D\to\R^2$ according to the system of equations 
\begin{equation}\label{eqOCT}
\boxed{
\begin{aligned}
\frac1{c^2}\partial_{tt} \b E(t,\b x)+\curl_{\b x}\curl_{\b x} \b E(t,\b x)&=-\frac{4\pi}{c^2}\int_{-\infty}^\infty\partial_t\mu(\tau,\b x)\b E(t-\tau,\b x)\d\tau,\quad&&t>0,\;\b x\in\R^3, \\
\b E(t,\b x) &= f(t+\tfrac{\b x_3}c)\bm\eta\quad&&t\le0,\;\b x\in\R^3, \\
\b h(t,\bm\xi) &= \begin{pmatrix}E_1(t,\bm\xi)\\E_2(t,\bm\xi)\end{pmatrix},\quad&&t>0,\;\bm\xi\in\mathcal D.
\end{aligned}}
\end{equation}
At first, it seems that we have enough data to solve this problem: we have the three dimensional function $\b h$ for every choice of function $f$ and polarisation $\bm\eta$. However, since the problem is linear in the electric field $\b E$, we do not gain any information by changing $f$ or having more than two linearly independent polarisations $\bm\eta$. To see this, we perform a Fourier transform $\mathcal F$ with respect to time. 

\begin{lemma}\label{thOCTLinearity}
Let $\mu:\R\times\R^3\to\R$ be given, $\bm\eta^{(k)}\in S^1\times\{0\}\subset\R^2\times\{0\}$, $k=1,2,3$, be so that the first two are linearly independent and let $f^{(k)}$ be sufficiently regular functions so that the first two functions $f^{(k)}$ have a non-vanishing Fourier transform:
\[ \mathcal Ff^{(k)}(\omega) \ne 0\quad\text{for every}\quad\omega\in\R,\;k\in\{1,2\}. \]

Moreover, let $\b E^{(k)}$, $k=1,2,3$, be the solutions of
\begin{equation*}\begin{aligned}
\frac1{c^2}\partial_{tt}\b E^{(k)}(t,\b x)+\curl_{\b x}\curl_{\b x} \b E^{(k)}(t,\b x)&=-\frac{4\pi}{c^2}\int_{-\infty}^\infty\partial_t\mu(\tau,\b x)\b E^{(k)}(t-\tau,\b x)\d\tau, &&t>0,\;\b x\in\R^3, \\
\b E^{(k)}(t,\b x) &= f^{(k)}(t+\tfrac{\b x_3}c)\bm\eta^{(k)}, &&t\le0,\;\b x\in\R^3.
\end{aligned}\end{equation*}

Then, the measurement data
\begin{equation}
 \label{eq:measurement_oct}
\b h^{(k)}(t,\bm\xi) = \begin{pmatrix}E^{(k)}_1(t,\bm\xi)\\E^{(k)}_2(t,\bm\xi)\end{pmatrix},\quad t>0,\;\bm\xi\in\mathcal D.
\end{equation}
fulfil the relation
\begin{equation}\label{eqOCTLinearity}
\boxed{
\mathcal F_t \b h^{(3)}(\omega,\bm\xi) = \sum_{k=1}^2 c_k\frac{\mathcal F f^{(3)}(\omega)}{\mathcal Ff^{(k)}(\omega)}\mathcal F_t \b h^{(k)}(\omega,\bm\xi),\quad \omega\in\R,\;\bm\xi\in\mathcal D,}
\end{equation}
where the coefficients $c_k\in\R$ are determined by $\bm\eta^{(3)}=\sum_{k=1}^2 c_k\bm\eta^{(k)}$.
\end{lemma}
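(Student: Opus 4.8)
The plan is to reduce everything to a single temporal frequency by applying the Fourier transform $\mathcal F_t$ from \eqref{eq:fourier} to the differential equation \eqref{eqElectricField}. Since the memory term is a convolution in time, $\mathcal F_t$ turns it into a pointwise product in $\omega$, so that the frequencies decouple. Concretely, writing $\hat{\b E}^{(k)}=\mathcal F_t\b E^{(k)}$ and $\hat\mu=\mathcal F_t\mu$, the equation becomes, for every fixed $\omega\in\R$,
\[ \curl_{\b x}\curl_{\b x}\hat{\b E}^{(k)}(\omega,\b x)-\tfrac{\omega^2}{c^2}\hat{\b E}^{(k)}(\omega,\b x)-\tfrac{4\pi\i\omega}{c^2}\hat\mu(\omega,\b x)\hat{\b E}^{(k)}(\omega,\b x)=0, \]
a linear, homogeneous spatial equation whose coefficients depend on $\b x$ and $\omega$ but not on the index $k$. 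The role of the illumination now sits entirely in the behaviour prescribed for $t\le0$: transforming the incident field \eqref{eq:ill} gives $\mathcal F_t\b E^{(0),(k)}(\omega,\b x)=\mathcal Ff^{(k)}(\omega)\,\e^{-\i\omega x_3/c}\bm\eta^{(k)}$, so that at each fixed $\omega$ the incoming data is the scalar $\mathcal Ff^{(k)}(\omega)$ times the unit plane wave $\e^{-\i\omega x_3/c}\bm\eta^{(k)}$.

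Second, I would introduce the candidate field $\b E^\star$ defined through
\[ \mathcal F_t\b E^\star(\omega,\b x)=\sum_{k=1}^2 c_k\,\frac{\mathcal Ff^{(3)}(\omega)}{\mathcal Ff^{(k)}(\omega)}\,\hat{\b E}^{(k)}(\omega,\b x), \]
which is well defined because $\mathcal Ff^{(k)}(\omega)\ne0$ for $k\in\{1,2\}$ by hypothesis. Since the transformed equation is linear and homogeneous with $k$-independent coefficients, and since the scalar factors $c_k\,\mathcal Ff^{(3)}(\omega)/\mathcal Ff^{(k)}(\omega)$ do not depend on $\b x$, the field $\b E^\star$ again solves the transformed version of \eqref{eqElectricField} for the same parameter $\mu$. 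It then remains to match the incoming data: inserting the transformed incident fields and using the defining relation $\bm\eta^{(3)}=\sum_{k=1}^2 c_k\bm\eta^{(k)}$, the factors $\mathcal Ff^{(k)}(\omega)$ cancel and I obtain $\mathcal F_t\b E^\star(\omega,\b x)=\mathcal Ff^{(3)}(\omega)\,\e^{-\i\omega x_3/c}\bm\eta^{(3)}$ on the incoming part, i.e. $\b E^\star$ and $\b E^{(3)}$ share the same illumination $f^{(3)}(t+\tfrac{x_3}c)\bm\eta^{(3)}$ for $t\le0$.

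Third, I would invoke uniqueness of the solution of \eqref{eqElectricField} with the prescribed behaviour for $t\le0$ — which is exactly what the retarded integral representation \eqref{eqElectricFieldMacroIntegral} of \autoref{thElectricFieldSolMacro} provides, exhibiting the solution as the incident field plus a causal scattered part uniquely determined by $\mu$ — to conclude $\b E^\star=\b E^{(3)}$. Restricting the identity $\mathcal F_t\b E^{(3)}=\mathcal F_t\b E^\star$ to the first two components and to $\b x=\bm\xi\in\mathcal D$ then yields \eqref{eqOCTLinearity} through the definition \eqref{eq:measurement_oct} of $\b h^{(k)}$.

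I expect the one genuinely delicate point to be this uniqueness step, which underpins the whole reduction: I must argue that prescribing the field for $t\le0$ (equivalently, selecting the causal scattered field) pins down the frequency-domain solution uniquely, so that agreement of the incoming data forces $\b E^\star=\b E^{(3)}$. The non-local memory term $\partial_t\mu*_t\b E$ has to be handled with care — it is precisely what makes the frequency decoupling both necessary and legitimate — but since $\mu\in C^\infty_{\mathrm c}$ and $f^{(k)}\in C^\infty_{\mathrm c}$ guarantee that all incident fields and, via \eqref{eqElectricFieldMacroIntegral}, all scattered fields are well behaved, the Fourier transforms and their manipulation are justified. The remaining algebra, namely the cancellation of the $\mathcal Ff^{(k)}$ factors and the polarisation bookkeeping, is then routine.
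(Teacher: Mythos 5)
Your argument is correct and follows essentially the same route as the paper: the paper likewise passes to the temporal Fourier domain, writes the problem as the Lippmann--Schwinger-type integral equation \eqref{eqOCTFourierDomain} obtained from \eqref{eqElectricFieldMacroIntegral}, and concludes from linearity in the incident data $\mathcal Ff^{(k)}(\omega)\,\e^{-\i\omega x_3/c}\bm\eta^{(k)}$ that $\mathcal F_t\b E^{(3)}=\sum_{k=1}^2 c_k\frac{\mathcal Ff^{(3)}}{\mathcal Ff^{(k)}}\mathcal F_t\b E^{(k)}$ before restricting to $\mathcal D$. Your version merely makes explicit the uniqueness step that the paper compresses into ``the linearity of this equation implies,'' using exactly the same integral representation from \autoref{thElectricFieldSolMacro} to justify it.
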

\begin{proof}
As in \autoref{thElectricFieldSolMacro}, we consider  the equivalent integral equation \eqref{eqElectricFieldMacroIntegral} for $\b E_s = 0$ 
\begin{align*}
\b E^{(k)}(t,\b x) = f^{(k)}(t+\tfrac{\b x_3}c)\bm\eta^{(k)} -\int_{\R^3}\int_{-\infty}^{t-\frac{|\b y|}c}\frac1{|\b y|}\left(\grad_{\b x}\div_{\b x}-\frac1{c^2}\partial_{tt}\right)\j^{(k)}(\tau,\b x+\b y)\d\tau\d \b y.
\end{align*}
Applying a Fourier transform with respect to time to the above equation and using \eqref{eqDielectricum} in the frequency domain we get that
\begin{align}\label{eqOCTFourierDomain}
\mathcal F_t \b E^{(k)}(\omega,\b x) &=\mathcal Ff^{(k)}(\omega)\e^{-\i\frac\omega cx_3}\bm\eta^{(k)} \nonumber\\
&+\left(\grad_{\b x}\div_{\b x}+\frac{\omega^2} {c^2}\right)\int_{\R^3}\frac{\i\e^{\i\frac\omega c|\b x-\b y|}}{\omega|\b x-\b y|}\mathcal F_t\mu(\omega,\b y)\mathcal F_t \b E^{(k)} (\omega,\b y)\d \b y .
\end{align}
Now, the linearity of this equation implies that the solution $\mathcal F_t \b E^{(3)}$ of the third equation can be written in the form
\[ \mathcal F_t \b E^{(3)}(\omega,\b x) = \sum_{k=1}^2 c_k\frac{\mathcal Ff^{(3)}(\omega)}{\mathcal Ff^{(k)}(\omega)}\mathcal F_t \b E^{(k)}(\omega,\b x). \]
Restricting this relation to the detector surface $\mathcal D$, we get \eqref{eqOCTLinearity}.
\end{proof}

In particular, even if one considers the assumptions of Born and far field approximations (specified later), we see that the measurements are equivalent to the determination of the four dimensional Fourier transform of the function $\mu$ on a cone in $\R^3$, see \cite[Proposition 5.2]{ElbMinSch15}. Thus, also for OCT, we do not have sufficient data to uniquely recover the material parameter $\mu$. 

However, under certain simplifications there exist works that provide reconstructions. A main assumption is that the medium is non-dispersive, meaning that the temporal Fourier transform of $\mu$ does not depend on frequency. Under this assumption, Mark \textit{et al} \cite{MarRalBopCar07} proposed algorithms under the Born approximation, and in \cite{FerHitKamZai95, Hel96} the one dimensional case was considered. Also in \cite{RalMarKamBop05, RalMarCarBop06} the the OCT system is described using a single backscattering model.


\section{Combined System}\label{sePAT_OCT}
We have seen that the inverse problems of quantitative photoacoustic imaging and OCT both lack data to obtain a unique 
reconstruction of the material parameters depending on the specific modeling. Since the illumination for photoacoustic imaging and OCT are modeled analogously, the rely on the same  electromagnetic material parameters of the medium. Therefore, we are suggesting to use the two measurements in combination 
to obtain additional informations.

To this end, we want to rewrite the optical problem in photoacoustics in terms of the material parameter $\mu$ \eqref{eqDielectricum} 
used in Maxwell's equations for the modelling of OCT instead of the absorption and scattering coefficients appearing in the radiative transfer equation. Thus, we need an expression of the absorbed energy of an electromagnetic wave in an dielectric medium.

Below we summarize the variables, which are used throughout this section, as a backup reference:
\begin{longtable}[c]{|l|l|c|}
\hline
Symbol & Quantity & Definition \\ \hline
 $\bm\eta$ & incident polarization vector & \eqref{eq:ill}\\
 ${\bm E}^{(0)}$ & incident field & \eqref{eq:ill}\\
   $\mathcal{D}$ & detector's array in OCT & \eqref{eq:surface}\\
 ${\bm E}^z$ & reflected field in OCT & \eqref{eqReflectedField}\\
  $I_j$ & measured intensity in OCT & \eqref{eqIntensity}\\
 $\tilde I_j$ & effective measured intensity in OCT & \eqref{eqEffectiveMeasurements}\\
 $\b h$ & OCT measurements & \eqref{eqh} \\
   $p^{(0)}$ & PAT measurements  & \eqref{eqCombinedPAT} \\
 $\tilde h$ & modified OCT measurements & \eqref{defH} \\
    $\tilde p$ & modified PAT measurements  & \eqref{eqCombinedPATSimplified} \\
    \hline
\caption{Combined PAT-OCT System}
\end{longtable}
\subsection{Absorbed energy}
We therefore start again with Maxwell's microscopic equations \eqref{eqMaxwell}, \eqref{eqChargeAndCurrent} and \eqref{eqLorentzForce} to describe the medium and define the averaged mechanical energy as the sum of the kinetic energies of the particles and their electrostatic potential energy:
\begin{equation}
\label{eq:me}
 \mathcal E(t,\b x) = \sum_{i=1}^N\frac{m_i}2|\b x_i'(t)|^2 w(\b x_i(t)-\b x)+\frac12\sum_{i\ne j}\frac{q_iq_j}{|\b x_i(t)-\b x_j(t)|}w(\b x_i(t)-\b x)w(\b x_j(t)-\b x)\;.
\end{equation}
Assuming now that no particles enter the support of $w(\cdot-\b x)$ or leave the region where $w$ is constant, the change in energy is given by
\begin{equation} \label{eq:Et}
 \begin{aligned}
\partial_t\mathcal E(t,\b x) &= \sum_{i=1}^Nm_i\left<\b x_i'(t),\b x_i''(t)\right>w(\b x_i(t)-\b x) \\
&-\frac12\sum_{i\ne j}q_iq_j \frac{\left<\b x_i(t)-\b x_j(t),\b x_i'(t)-\b x_j'(t)\right>} {|\b x_i(t)-\b x_j(t)|^3}w(\b x_i(t)-\b x)w(\b x_j(t)-\b x)\\
&= \sum_{i=1}^Nm_i\left<\b x_i'(t),\b x_i''(t)\right>w(\b x_i(t)-\b x) \\
&-\sum_{i\ne j}q_iq_j\frac{\left<\b x_i(t)-\b x_j(t),\b x_i'(t)\right>}{|\b x_i(t)-\b x_j(t)|^3}w(\b x_i(t)-\b x)w(\b x_j(t)-\b x).
\end{aligned}
\end{equation}
We consider the limit when the charge distributions $\rho_i$ of the single particles tend to $\delta$-distributions, 
then the Lorentz force \eqref{eqLorentzForce} is given by
\begin{equation*}
 m_i \b x_i'' (t) = q_i \b e(t,\b x_i(t)) + q_i \frac{\b x_i'(t)}{c} \times \b b(t,\b x_i(t))= q_i \b e(t,\b x_i(t)) + q_i \frac{\b x_i'(t)}{c}\,,
\end{equation*}
where the last identity is due to the fact that $\tfrac{\b x_i'}{c} \times \b b$ is orthogonal to $\b x_i'$.

Using this identity \eqref{eq:Et} we find that
\begin{align*}
\partial_t\mathcal E(t,\b x) &= \sum_{i=1}^Nq_i\left<\b x_i'(t),\b e(t,\b x_i(t))\right>w(\b x_i(t)-\b x) \\
&-\sum_{i\ne j}q_iq_j\frac{\left<\b x_i(t)-\b x_j(t),\b x_i'(t)\right>}{|\b x_i(t)-\b x_j(t)|^3}w(\b x_i(t)-\b x)w(\b x_j(t)-\b x).
\end{align*}

We further assume that the particles are moving relatively slow so that we may approximate the electric field $\b e$ by the electrostatic approximation, which can be represented via \eqref{eqCompact} by neglecting the contribution of the current $\b j$:
\begin{align*}
 \b e(t,\b x) &\approx \b e^{(0)}(t,\b x)-\int_{B_{ct}(0)}\frac1{|\b y|}\grad_{\b x}\rho(t,\b x+\b y)\d \b y \\
 &\approx \b e^{(0)}(t,\b x) - \sum_{j=1}^Nq_j \int_{\R^3} \frac1{|\b y|}\grad_{\b x} \delta (\b x+\b y - \b x_j(t))\d \b y  \\
&\approx \b e^{(0)}(t,\b x)+\sum_{j=1}^Nq_j\frac{\b x-\b x_j(t)}{|\b x-\b x_j(t)|^3}.
\end{align*}
where $\b e^{(0)}$ denotes the solution of the homogeneous wave equation with initial data $\b e^{(0)}(0,\b x)=\b e(0,\b x)$ and $\partial_t \b e^{(0)}(0,\b x)=\curl_{\b x} \b b(0,\b x)$. Here, we have used that for $t$ sufficiently large, $B_{ct}(0)$ will contain the complete medium and \eqref{eqChargeDensity} where $\rho_i$ is replaced by a $\delta$-distribution.

Then, we can write the change in energy in the form
\begin{equation}
\begin{aligned}
\label{eq:pt}
\partial_t\mathcal E(t,\b x) &\approx \sum_{i=1}^Nq_i\left<\b x_i'(t),\b e^{(0)}(t,\b x_i(t))\right>w(\b x_i(t)-\b x) \\
&+\sum_{i\ne j}q_iq_j\frac{\left<\b x_i(t)-\b x_j(t),\b x_i'(t)\right>}{|\b x_i(t)-\b x_j(t)|^3}w(\b x_i(t)-\b x)(1-w(\b x_j(t)-\b x)).
\end{aligned}
\end{equation}

Until now, we had no particular assumptions on the size of the support of $w$. We have seen that the particles have diameter of order \SI{e-14}{\metre}. The size of molecules in biological tissue is of order \SI{e-10}{\metre}, a water molecule for example has a 
diameter of \SI{0.3}{\nano\metre}. Thus, we specify $r$ to be of order \SI{e-10}{\metre}.
In what follows we assume in addition to $w\in C^\infty_{\mathrm c}(\R^3)$ that 
\begin{equation}
\label{eq:w}
 w \equiv \text{const in } B_r(0) \text{ and } w \equiv 0 \text{ outside } B_{r+\varepsilon}(0)\;.
\end{equation}
Practically, the typical wavelength of photoacoustic imaging and OCT is around \SI{1000}{\nano\metre}, which is three orders of magnitude larger than the size of molecules. Then, a ball with radius $\tilde{r}$ of order \SI{e-6}{\metre} will denote the resolution limitation and we can assume that $\tilde{r}\gg r.$

For the averaging here, we assume that $\b e^{(0)}$ is almost constant on the support of $w(\cdot-\b x)$. Then, we may approximate 
\begin{equation}
 \b e^{(0)}(t,\b x_i(t)) \approx \int_{\R^3} \b e^{(0)}(t,\b y)w(\b y-\b x)\d \b y, \quad \b x_i(t)\in B_r (\b x)\;.
\end{equation}
The second term in \eqref{eq:pt} is nonzero only when $\b x_i \in B_r (\b x)$ and $\b x_j \notin B_r (\b x) .$ We want to assume that locally (related to resolution limit) there is almost no energy exchange:
\[
\sum_{i\ne j}q_iq_j\frac{\left<\b x_i(t)-\b x_j(t),\b x_i'(t)\right>}{|\b x_i(t)-\b x_j(t)|^3}w(\b x_i(t)-\b x)(1-w(\b x_j(t)-\b x)) \approx 0, \quad \b x_j (t)\in B_{\tilde{r}} (\b x) \setminus B_r (\b x) .
\]

For the case, where $\b x_i \in B_r (\b x)$ and $\b x_j \notin B_{\tilde{r}} (\b x),$ since $\tilde{r}\gg r$, 
we can approximate $\b x_i - \b x_j \simeq \b y - \b x_j ,$ for every $\b y \in B_r (\b x)$ 
and we thus obtain
\begin{align*}
\partial_t\mathcal E(t,\b x) &\approx \left<\int_{\R^3}\b e^{(0)}(t,\b y)w(\b y-\b x)\d \b y,\sum_{i=1}^Nq_i \b x_i'(t)w(\b x_i(t)-\b x)\right> \\
&+\left<\sum_{j=1}^Nq_j\int_{\R^3}\frac{\b y-\b x_j(t)}{|\b y-\b x_j(t)|^3}w(\b y-\b x)\d \b y(1-w(\b x_j(t)-\b x)),\sum_{i=1}^Nq_i \b x_i'(t)w(\b x_i(t)-\b x)\right>.
\end{align*}
Now, since the support of $w$ is small compared to the medium, we may take in the second term the sum over all particles $\b x_j$ without introducing a large error and find that
\[ \partial_t\mathcal E(t,\b x) \approx \left< \b E(t,\b x),\j(t,\b x)\right>. \]

Thus, in a non-magnetic, isotropic, linear dielectric, we have that the total absorbed energy is given by
\begin{equation}\label{eqAbsorbedEnergy}
\lim_{t\to\infty}\mathcal E(t,\b x)-\lim_{t\to-\infty}\mathcal E(t,\b x) \approx \int_{-\infty}^\infty\int_{-\infty}^\infty\mu(\tau,\b x)\left<\b E(t,\b x),\b E(t-\tau,\b x)\right>\d\tau\d t.
\end{equation}

\begin{remark}\label{rem41}
A different derivation of the absorbed energy follows from the definition of the total electromagnetic energy density and the energy transport of the electromagnetic wave (Poynting vector) as follows:
\begin{equation}
 \label{eq:ws}
 \mathcal W = \frac1{8\pi} ( \langle \b E , \b D  \rangle+ \langle \b B , \b B \rangle ), \quad \text{and} \quad
 \b S =\frac{c}{4\pi} ( \b E\times \b B )\,,
\end{equation}
respectively. Here, 
\begin{equation}
 \label{eq:electric_displacement}
 \b D = \b E + 4\pi \b P
\end{equation}
denotes the electric displacement. Using the elementary relation $\div_{\b x} (\b F \times \b G) = \b G \cdot \curl_{\b x} \b F - \b F \cdot \curl_{\b x} \b G$, 
the Maxwell's equations \eqref{eqMaxwellMacro3} and \eqref{eqMaxwellMacro4} it follows that
\[
 \div_{\b x} \b S = -\frac{1}{4\pi} \left( \langle \b B, \partial_t \b B \rangle +
 \langle \b E, \partial_t \b E \rangle \right) -  \langle \b E,\bar{\b j} \rangle .
\]
 Then, from the definition of $\mathcal W,$ integrating the above equation over a subvolume $\Omega$ of the medium and applying the divergence theorem gives 
\begin{equation}\label{conservationenergy2}
\begin{aligned}
- \partial_t \int_\Omega \mathcal W (t,\b x) \d \b x 
&= \int_{\partial \Omega} \langle \b S (t,\b x) , \b N \rangle \d s (\b x) + 
   \int_\Omega \langle \b E (t,\b x),\bar{\b j} (t,\b x) \rangle \d \b x \\
   &- \frac12 \int_\Omega \partial_t \langle \b E (t,\b x),\b P (t,\b x) \rangle \d \b x
\end{aligned}
\end{equation}
where $\b N$ is the outward pointing unit normal vector. 

Equation \eqref{conservationenergy2} describes the balance of energy:  By the definition of $\mathcal W$, 
the left hand side quantifies the variation of the total energy in the volume. 
The first term on the right hand side describes the variation of the energy across the boundary, and the last two terms describe the dissipated power. According to \cite{Jac98}, the integrands of the last two terms in the right hand side integrated over time denotes the electric energy density, which reads as follows
\begin{equation*}
\begin{aligned}
  \int_{-\infty}^{\infty} \langle \b E (t,\b x),\bar{\b j} (t,\b x)\rangle \d t &= 
  \int_{-\infty}^{\infty} \int_{-\infty}^{\infty} \mu(\tau,x) \langle \b E(t,\b x),\b E(t-\tau,\b x) \rangle \d\tau\d t\,,
\end{aligned}
\end{equation*}
resulting again in \eqref{eqAbsorbedEnergy}, if we assume that the polarization goes to zero after a long time.
\end{remark}

\subsection{Inverse problem}
Using the expression~\eqref{eqAbsorbedEnergy} for the absorbed energy as a replacement of $\mu_{\mathrm a}(\b x)\bar\Phi(\b x)$ in \eqref{eqInitialPressure}, we get by combining~\eqref{eqInitialPressure} and~\eqref{eqOCT} the inverse problem of calculating $\mu$ and $\gamma$ from the measurement data $p^{(0)}$ and $h$ for giving initial pulse $f$ and polarisation $\bm\eta$ obeying the equation system
\begin{equation*}\begin{aligned}
\frac1{c^2}\partial_{tt}\b E(t,\b x)&=-\curl_{\b x}\curl_{\b x} \b E(t,\b x)-\frac{4\pi}{c^2}\int_{-\infty}^\infty\partial_t\mu(\tau,\b x) \b E(t-\tau,\b x)\d\tau,\quad&&t>0,\;\b x\in\R^3, \\
\b E(t,\b x) &= f(t+\tfrac{\b x_3}c)\bm\eta,\quad&&t\le0,\;\b x\in\R^3, \\
p^{(0)}(\b x) &= \gamma(\b x)\int_{-\infty}^\infty\int_{-\infty}^\infty\mu(\tau,\b x)\left<\b E(t,\b x),\b E(t-\tau,\b x)\right>\d\tau\d t,\quad&&\b x\in\R^3, \\
\b h(t,\bm\xi) &= \begin{pmatrix}E_1(t,\bm\xi)\\E_2(t,\bm\xi)\end{pmatrix},\quad&&t>0,\;\bm\xi\in\mathcal D.
\end{aligned}\end{equation*}
Now, although varying the initial pulse $f$ does not give us additional information with respect to the measurements of OCT, see \autoref{thOCTLinearity}, its influence in the photoacoustic part is non-linear and provides us with independent data. Thus, by choosing a one-parameter family of functions $f$, for example almost monochromatic waves $f_\nu$ centered at a frequency $\nu$ for arbitrary $\nu>0$, we get for every $\nu>0$ the equation system
\begin{subequations}\label{eqCombined}
\begin{empheq}[box=\fbox]{align}
\frac1{c^2}\partial_{tt} \b E_\nu &= -\curl_{\b x}\curl_{\b x} \b E_\nu-\frac{4\pi}{c^2}\int_{-\infty}^\infty\partial_t\mu(\tau,\b x) \b E_\nu(t-\tau,\b x)\d\tau, & t>0,\;\b x\in\R^3, \label{eqCombinedWave} \\
 \b E_\nu(t,\b x) &= f_\nu(t+\tfrac{\b x_3}c)\bm\eta, & t\le0,\;\b x\in\R^3, \label{eqCombinedWaveInitial} \\
p^{(0)}_\nu(\b x) &= \gamma(\b x)\int_{-\infty}^\infty\int_{-\infty}^\infty\mu(\tau,\b x)\left<\b E_\nu(t,\b x),\b E_\nu(t-\tau,\b x)\right>\d\tau\d t, & \b x\in\R^3, \label{eqCombinedPAT} \\
\b h_\nu(t,\bm\xi) &= \begin{pmatrix}E_{\nu,1}(t,\bm\xi)\\E_{\nu,2}(t,\bm\xi)\end{pmatrix}, & t>0,\;\bm\xi\in\mathcal D. \label{eqCombinedOCT}
\end{empheq}
\end{subequations}
Now, the first two equations uniquely determine $\b E_\nu$ as a function of $\mu$. Then, depending on~$\gamma$, we would have to solve the third equation for the four dimensional function $\mu$, which would leave us with two three dimensional equations for the three dimensional Gr\"uneisen parameter $\gamma$.

Although the dimensions of the data seem to perfectly fit for a reconstruction, it is still an open problem if these equations uniquely determine the material parameters, although there are partial results in this direction, see~\cite{BalZho14}, where the injectivity of a very similar system is discussed.

As a plausibility argument, we want to show in the next section that at least in a simplified case, the parameters can be uniquely recovered.

\section{Weakly Scattering Medium and multiple Pulsed Laser Illuminations
}\label{seFre}

The full system \eqref{eqCombined} can be equivalently transformed to a Fredholm integral equation of the second kind under the following assumptions:
\begin{enumerate}
\item Born approximation: The medium is weakly scattering, meaning that $\mathcal F_t\mu$ is sufficiently small.
\item Far-field approximation: Typically, in an OCT system, the measurements are performed in a distance much bigger compared to the size of the medium. 
\item Specific illumination: The support of the initial pulses tends to a delta-distribution such that the optical parameter can be assumed constant in this spectrum.
\end{enumerate}

In a first step, we derive a Fredholm integral equation of the first kind for the unknown Gr\"uneisen parameter.
\begin{proposition}
The complete system \eqref{eqCombined} under the above three assumptions is reduced to the integral equation 
\begin{equation} 
\label{eq:ig1}
 \int_{\R^3}K[\tilde p](\nu,\bm\vartheta;\b y)\frac1{\gamma(\b y)}\d \b y = \tilde h(\nu,\bm\vartheta) 
\end{equation}
where
\begin{equation} 
\label{eq:ig2}
 K[\tilde p](\nu,\bm\vartheta;\b y) = \left(\tilde p(\nu,\b y)-
 \frac\i\pi\int_{-\infty}^\infty\frac{\tilde p(\tilde\nu,\b y)}{\tilde\nu-\nu}\d\tilde\nu\right)
 \e^{-\i\frac\nu c\left<\bm\vartheta+\b e_3,\b y\right>} , \quad \tilde p (\nu, \b x) :=2\pi p_\nu^{(0)} (\b x)
\end{equation}
and
\begin{equation}\label{defH}
\tilde h(\nu,\bm\vartheta) = \sum_{j=1}^2 \left( \frac{\mathcal F_t \b E_\nu^{(b)}(\nu,R\bm\vartheta)}{\mathcal Ff_\nu(\nu)} - \e^{-\i\frac\nu cR\vartheta_3} \eta_j \right) \frac{\i Rc^2 e^{-\i \frac\nu c R}}{\nu ( \vartheta\times\vartheta\times\eta)_j} .
\end{equation}
\end{proposition}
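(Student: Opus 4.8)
The plan is to treat the photoacoustic equation~\eqref{eqCombinedPAT} and the optical equation~\eqref{eqCombinedOCT} separately under the three approximations, to express each in terms of the single object $\mathcal F_\tau\mu(\nu,\cdot)$, and only then to combine them. The organising observation is that the measured photoacoustic datum $\tilde p$ already carries the factor $\gamma$; so after the kernel $K[\tilde p]$ has reconstructed $\gamma(\b y)\mathcal F_\tau\mu(\nu,\b y)$ and we multiply by the unknown $1/\gamma(\b y)$, the Gr\"uneisen parameter cancels and we are left with an oscillatory integral of $\mathcal F_\tau\mu(\nu,\cdot)$ which is exactly what the optical far-field quantity $\tilde h$ records. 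I would therefore compute both sides and match them.

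On the photoacoustic side, I would start from~\eqref{eqCombinedPAT}, write $p_\nu^{(0)}(\b x)=\gamma(\b x)A_\nu(\b x)$ with $A_\nu$ the absorbed-energy double integral, and invoke the Born approximation to replace $\b E_\nu$ in $A_\nu$ by the incident field~\eqref{eq:ill}, the error being of higher order in $\mathcal F_\tau\mu$. Applying Parseval's identity with the convention~\eqref{eq:fourier} to the inner time-correlation turns this into $A_\nu(\b x)=\tfrac1{2\pi}\int_{\R}\mathcal F_\tau\mu(\omega,\b x)\,|\mathcal Ff_\nu(\omega)|^2\d\omega$ (using $|\bm\eta|=1$). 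The delta-illumination assumption localises $|\mathcal Ff_\nu|^2$ at $\omega=\pm\nu$, where $\mathcal F_\tau\mu$ is treated as constant, and since $\mu$ is real this yields $A_\nu(\b x)\propto\Re\mathcal F_\tau\mu(\nu,\b x)$, whence $\tilde p(\nu,\b x)\propto\gamma(\b x)\Re\mathcal F_\tau\mu(\nu,\b x)$. Because \autoref{deDielectricum} forces $\mu(\tau,\b x)=0$ for $\tau\le0$, the map $\nu\mapsto\mathcal F_\tau\mu(\nu,\b x)$ is the boundary value of a function holomorphic in the upper half-plane, so its real and imaginary parts form a Hilbert-transform pair. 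I would then verify, via the Sokhotski--Plemelj formula, that the bracket $\tilde p(\nu,\b y)-\tfrac\i\pi\,\mathrm{p.v.}\!\int\frac{\tilde p(\tilde\nu,\b y)}{\tilde\nu-\nu}\d\tilde\nu$ appearing in~\eqref{eq:ig2} reconstructs precisely $\gamma(\b y)\mathcal F_\tau\mu(\nu,\b y)$ up to the fixed normalisation, so that $K[\tilde p](\nu,\bm\vartheta;\b y)/\gamma(\b y)$ equals $\mathcal F_\tau\mu(\nu,\b y)\,\e^{-\i\frac\nu c\langle\bm\vartheta+\b e_3,\b y\rangle}$.

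On the optical side, I would take the Fourier-domain integral equation~\eqref{eqOCTFourierDomain}, apply the Born approximation to replace $\mathcal F_t\b E_\nu$ in the integrand by the incident plane wave, and pass to the far field by setting $\b x=R\bm\vartheta$ and using $|\b x-\b y|\approx R-\langle\bm\vartheta,\b y\rangle$ as $R\to\infty$. In this regime the phase gradient is $\tfrac{\i\omega}c\bm\vartheta$, so the operator $\grad_{\b x}\div_{\b x}+\tfrac{\omega^2}{c^2}$ reduces to $\tfrac{\omega^2}{c^2}$ times the projection onto the plane orthogonal to $\bm\vartheta$, i.e.\ to the double cross product $-\bm\vartheta\times\bm\vartheta\times\bm\eta$, and the residual integral is $\int_{\R^3}\mathcal F_\tau\mu(\nu,\b y)\,\e^{-\i\frac\nu c\langle\bm\vartheta+\b e_3,\b y\rangle}\d\b y$, the phase $\bm\vartheta+\b e_3$ coming from the far-field direction together with the $\e^{-\i\frac\nu c y_3}$ of the incident wave. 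Dividing by $\mathcal Ff_\nu(\nu)$, subtracting the incident contribution $\e^{-\i\frac\nu cR\vartheta_3}\eta_j$, and multiplying by $\tfrac{\i Rc^2\e^{-\i\frac\nu cR}}{\nu(\vartheta\times\vartheta\times\eta)_j}$ exactly cancels the prefactor and the polarisation component; summing over $j=1,2$ then expresses $\tilde h(\nu,\bm\vartheta)$ as a constant multiple of the same integral of $\mathcal F_\tau\mu(\nu,\cdot)$. Substituting the reconstruction $K[\tilde p]/\gamma=\mathcal F_\tau\mu(\nu,\b y)\,\e^{-\i\frac\nu c\langle\bm\vartheta+\b e_3,\b y\rangle}$ into $\int_{\R^3}K[\tilde p]\,\tfrac1\gamma\d\b y$ and comparing with $\tilde h$ then produces the Fredholm equation~\eqref{eq:ig1}.

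The main obstacle is the far-field reduction of the optical measurements: the data $\b h_\nu$ are recorded on the planar detector $\mathcal D$ of~\eqref{eq:surface}, not in the far field, so I would need the angular-spectrum/far-field correspondence (as in \cite[Proposition 5.2]{ElbMinSch15}) to legitimately identify $\mathcal F_t\b E_\nu^{(b)}(\nu,R\bm\vartheta)$ with a quantity built from $\b h$, and to control the stationary-phase error in the expansion of $|\b x-\b y|$. A secondary but genuinely delicate point is bookkeeping of constants: the photoacoustic normalisation involves $\int|\mathcal Ff_\nu|^2$ whereas the optical one involves $\mathcal Ff_\nu(\nu)$, so the two chains must be reconciled by fixing the normalisation of the pulses $f_\nu$; and the principal-value integral in the Kramers--Kronig step must be justified from the decay and smoothness of $\mathcal F_\tau\mu(\cdot,\b x)$ furnished by $\mu\in C^\infty_{\mathrm c}(\R\times\R^3)$.
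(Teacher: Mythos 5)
Your proposal follows essentially the same route as the paper's proof: the Born and far-field approximations reduce the OCT data to $\tilde h(\nu,\bm\vartheta)=\int_{\R^3}\e^{-\i\frac\nu c\left<\bm\vartheta+\b e_3,\b y\right>}\mathcal F_t\mu(\nu,\b y)\d \b y$, Plancherel together with the Born approximation and the narrow-band illumination reduce the PAT datum to $\tilde p(\nu,\b x)=\gamma(\b x)\Re(\mathcal F_t\mu(\nu,\b x))$, and the Kramers--Kronig (Hilbert-transform) relation assembles the full complex $\mathcal F_t\mu$ inside the kernel $K$ so that $\gamma$ cancels. The only cosmetic difference is that you derive the Kramers--Kronig step from causality of $\mu$ via Sokhotski--Plemelj, where the paper simply cites the relation.
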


\begin{proof}
We start with \eqref{eqCombinedWave} - \eqref{eqCombinedWaveInitial}. We recall equation \eqref{eqOCTFourierDomain} derived in the proof of \autoref{thOCTLinearity}, then we obtain
\begin{align*}
\mathcal F_t \b E_\nu(\omega,\b x) &=\mathcal Ff_\nu(\omega)\e^{-\i\frac\omega cx_3}\bm\eta +\left(\grad_{\b x}\div_{\b x}+\frac{\omega^2} {c^2}\right)\int_{\R^3}\frac{\i\e^{\i\frac\omega c|\b x-\b y|}}{\omega|\b x-\b y|}\mathcal F_t\mu(\omega,\b y)\mathcal F_t \b E_\nu(\omega,\b y)\d \b y ,
\end{align*}
that holds for every frequency $\nu >0.$ Under the Born approximation, we replace $\mathcal F_t \b E_\nu$ in the integral by the incident field, to obtain $\b E_\nu^{(b)}$:
\begin{align*}
\mathcal F_t \b E_\nu^{(b)}(\omega,\b x) =\mathcal Ff_\nu(\omega)\left(\e^{-\i\frac\omega cx_3}\bm\eta+\left(\grad_{\b x}\div_{\b x}+\frac{\omega^2}{c^2}\right)\int_{\R^3}\frac{\i\e^{\i\frac\omega c(|\b x-\b y|- y_3)}}{\omega|\b x-\b y|}\mathcal F_t\mu(\omega,\b y)\d \b y\,\bm\eta\right).
\end{align*}
We set $\b x = R\bm\vartheta, R>0$ and $\bm\vartheta \in \mathbbm{S}^2.$  Considering now the far-field approximation, we can replace the above expression by its asymptotic behaviour for $R\to\infty,$ uniformly in $\bm\vartheta.$ 
This gives us, see for example \cite[Equation (4.1) in Chapter 4.1]{ElbMinSch15}:
\begin{equation}\label{eqElectricFarBorn}
 \mathcal F_t \b E_\nu^{(b)}(\omega,R\vartheta)\simeq\mathcal Ff_\nu(\omega)
 \left(\e^{-\i\frac\omega cR\vartheta_3}\bm\eta-\frac{\i\omega e^{\i \frac\omega c R}}{Rc^2}
 \bm\vartheta\times\bm\vartheta\times\bm\eta
 \int_{\R^3}\e^{-\i\frac\omega c\left<\bm\vartheta+\b e_3,\b y\right>}
 \mathcal F_t\mu(\omega,\b y)\d \b y\right).
\end{equation}
Then, for every $\bm\vartheta\in\tilde{\mathcal D}=\{\bm\theta\in \mathbbm{S}^2\mid \exists R>0: R\bm\theta\in\mathcal D\}$ and $\nu \in \R^+$ the above equation using equation \eqref{eqCombinedOCT} and the definition \eqref{defH} can be rewritten as
\begin{equation}\label{eqCombinedOCTSimplified}
\tilde h(\nu,\bm\vartheta) = \int_{\R^3}\e^{-\i\frac\nu c\left<\bm\vartheta+\b e_3,\b y\right>}\mathcal F_t\mu(\nu,\b y)\d \b y,
\end{equation}
where we assumed that $\mathcal Ff_\nu(\nu)\ne0$. 

Similarly, we proceed with the equation \eqref{eqCombinedPAT} of the PAT measurement, written as
\begin{equation*}
\boxed{
 p^{(0)}_\nu(\b x) = \gamma(\b x)\int_{-\infty}^\infty \left<\b E_\nu(t,\b x),\int_{-\infty}^\infty\mu(\tau,\b x)\b E_\nu(t-\tau,\b x)\d\tau \right>\d t .}
\end{equation*}
From Plancherel's formula, if follows that
\begin{equation*}
\begin{aligned}
p^{(0)}_\nu(\b x) &= \gamma(\b x)\frac1{2\pi}\int_{-\infty}^\infty \left<\mathcal F_t \b E_\nu(\omega ,\b x),\overline{\mathcal F_t\mu(\omega,\b x)\mathcal F_t \b E_\nu(\omega,\b x)} \right>\d \omega \\
&= \gamma(\b x)\frac1{2\pi}\int_{-\infty}^\infty \overline{\mathcal F_t\mu(\omega,\b x)} |\mathcal F_t \b E_\nu(\omega,\b x)|^2 \d \omega \\
&= \gamma(\b x)\frac1{2\pi}\int_{-\infty}^\infty \mathcal F_t\mu(\omega,\b x) |\mathcal F_t \b E_\nu(\omega,\b x)|^2 \d \omega .
\end{aligned}
\end{equation*}
For the last equation we used that $p^{(0)}_\nu$ is real valued.
Considering again the Born approximation,
we obtain
\begin{equation*}
\boxed{
\tilde p(\nu,\b x)=\gamma(\b x)\int_{-\infty}^\infty\mathcal F_t\mu(\omega,\b x)|\mathcal Ff_\nu(\omega)|^2 \d\omega.
}
\end{equation*}
To take advantage of the third assumption, we choose initial pulses so that $\mathcal Ff_\nu$ has only support on a domain 
$\{\omega\in\R\mid |\omega|\in[\nu-\varepsilon,\nu+\varepsilon]\}$ with a sufficiently small $\varepsilon>0$ such that $\mathcal F_t\mu(\cdot,\b x)$ can be assumed to be for every $\b x\in\R^3$ constant on this support (we remark that since $f$ and $\mu$ are a real-valued functions, the real and imaginary parts of their Fourier transforms have to be even and odd, respectively). If we additionally normalise $\int_{-\infty}^\infty|\mathcal Ff_\nu(\omega)|^2\d\omega=\frac12$, we find that
\begin{equation}\label{eqCombinedPATSimplified}
\tilde p(\nu,\b x)=\gamma(\b x)\Re(\mathcal F_t\mu(\nu,\b x)).
\end{equation}
Moreover, since $\mu$ is a real-valued function, we know that its Fourier transform fulfils the Kramers--Kronig relation \cite{LanLif87}
\begin{equation}\label{eqKramersKronig}
\Im(\mathcal F_t\mu(\omega,\b x)) = -\frac1\pi\int_{-\infty}^\infty\frac{\Re(\mathcal F_t\mu(\tilde\omega,\b x))}{\tilde\omega-\omega}\d\tilde\omega.
\end{equation}

Thus, combining \eqref{eqCombinedOCTSimplified}, \eqref{eqCombinedPATSimplified}, and \eqref{eqKramersKronig}, we get the integral equation \eqref{eq:ig1} for the function $\frac1\gamma$ where the kernel depends on the data $\tilde p$ obtained from the PAT measurements.
\end{proof}

\begin{remark}
\begin{enumerate}
\item Compared to \cite[Formula (20)]{ElbMinSch15}, we have to mention that the different term in front of the integral in equation \eqref{eqElectricFarBorn} is due to the different scaling between the coefficients $\mu$ and $\chi,$ see the definition of $\b P.$
\item In practice, we do have equation \eqref{eqCombinedOCTSimplified} for every $\nu \in \R^+$ since the band-limited source allows measurements only for a fixed frequency spectrum.
\item In the most commonly used formulas appears the imaginary part of the permittivity or the conductivity in \eqref{eqCombinedPATSimplified}. The differentiation of our result is due to the definition of $\mu ,$ see \autoref{remark34}, and should not confuse the reader.
\end{enumerate}
\end{remark}

\subsection{Analytical formulas for special material}
The particular form of the kernel $K,$ see equation \eqref{eq:ig2}, motivated us to transform the integral equation \eqref{eq:ig1} of the first kind to one of the second kind.  More precisely, in the special case, where the object mainly consists of a single material with the frequency dependence given by some function $\alpha$ and a density distribution $\beta$, that is, if the initial pressure is of the form
\begin{equation*}
 \tilde p(\nu, \b x) = \alpha(\nu)\beta(\b x)+\varepsilon(\nu,\b x)
\end{equation*}
for some small function $\varepsilon$, we may write the integral equation \eqref{eq:ig1} considering \eqref{eq:ig2} as
\[ A(\nu)\int_{\R^3}\frac{\beta(\b y)}{\gamma(\b y)}\e^{-\i\frac\nu c\left<\bm\vartheta+\b e_3,\b y\right>}\d \b y+\int_{\R^3}K[\varepsilon](\nu,\bm\vartheta;\b y)\frac{\beta(\b y)}{\gamma(\b y)}\d \b y = \tilde h(\nu,\bm\vartheta), \]
where
\[ A(\nu) = \alpha(\nu)-\frac\i\pi\int_{-\infty}^\infty\frac{\alpha(\tilde\nu)}{\tilde\nu-\nu}\d\tilde\nu. \]
If we assume that $A(\nu)\ne0$, then we may rewrite this as  a Fredholm integral equation of the second kind for the Fourier transform 
\[ \Gamma(\b k) = \int_{\R^3}\frac{\beta(\b y)}{\gamma(\b y)}\e^{-\i\left<\b k,\b y\right>}\d \b y \]
of the function $\frac\beta\gamma$:
\[ \Gamma(\b k)+\frac1{A(\nu)}\int_{\R^3}\hat K(\b k;\bm\kappa)\Gamma(\bm\kappa)\d\bm\kappa = \hat h(\b k), \]
where the kernel $\hat K$ is for all values $\b k\in\R^3$ which are of the form $\b k=\frac\nu c(\bm\vartheta+\b e_3)$, $\vartheta\in\tilde{\mathcal D}$, defined by
\begin{equation*}
\hat K(\tfrac\nu c(\bm\vartheta+\b e_3);\bm\kappa) = \frac1{A(\nu)}\frac1{(2\pi)^3}\int_{\R^3}K[\varepsilon](\nu,\bm\vartheta;\b y)\e^{\i\left<\bm\kappa,\b y\right>}\d \b y
\end{equation*}
and $\hat h$ is similarly defined by
\begin{equation*}
 \hat h(\tfrac\nu c(\bm\vartheta+\b e_3)) = \frac1{A(\nu)}\tilde h(\nu,\bm\vartheta).
\end{equation*}
In particular, we see that for a sufficiently small function $\varepsilon$, the integral equation for $\Gamma$ can have at most one solution \cite{Kre89, Waz11}.

\section{Relation between the Material Parameters}

In this section we want to discuss the connection between the parameter $\mu$, defined in \eqref{eqDielectricum}, and the absorption and scattering coefficients, $\mu_{\mathrm a}$ and $\mu_{\mathrm s}$, appearing
in the radiative transfer equation \eqref{eq:rte}. 
Recently, there has been established the connection between the radiative transfer equation (vector and scalar) and equations 
in classical electromagnetism~\cite{Mis10,Rip11}. However, even under simplifying assumptions, these results do not lead to 
a direct connection between the material parameters $\mu$, $\mu_{\mathrm a}$ and $\mu_{\mathrm s}$ as the following remark shows. 

\begin{remark}
This remark is based on classical results \cite{BorWol99, TsaKonDin00} and follows \cite{Rip11}. Let the medium contain $N$ well separated scattering particles. We assume that there exist balls $B_k , \, k=1,...,N$ containing only one particle with large radius. We define by $\b N_k$ the unit outward normal vector on the boundary $\partial B_k .$ Let us denote by $\b F^+$ and $\b F^-$ the outgoing (scattered) and incoming (incident) field for a given particle, respectively.  If we neglect the interference effects between the incoming and outgoing fields, we can set as incident on the subdomain $B_k$ the field
\[
\b E_k^- = \sum_{\b x \in \Gamma_k}\b E, \quad \mbox{where} \quad \Gamma_k = \left\lbrace \b x \in \partial B_k \mid \left\langle \b S , \b N_k\right\rangle < 0\right\rbrace ,
\]
where $\b S$ is defined in \autoref{rem41}. Then, $\b E_k^+ = \b E -\b E_k^- .$ We redefine the time averaged Poynting vector (the real part of the complex Poynting vector)
\[
\b S^\alpha (\nu, \b x):= \frac{c}{8\pi} \Re (  \mathcal F_t \b E^\alpha (\nu,\b x)  \times \overline{\mathcal F_t \b B^\alpha (\nu,\b x )}), \quad \alpha = -,+ ,
\]
describing the average flux of the incident and the scattered field, respectively. Then, the averaged rate of the energy incident on $k$th particle is given by $S_k^- (\nu):= \tfrac{1}{B_k}\int_{B_k} |\b S^{-} (\nu,\b x)| \d \b x .$

Then, since every domain contains only one particle, the coefficients obtain the forms
\begin{align*}
\mu_{\mathrm a,k} (\nu) &= \frac{B_k}{S_k^- (\nu)}\int_{B_k} \int_{\R} \langle \b E (t,\b x),\bar{\b j} (t,\b x)\rangle \d t \d \b x, \quad k=1,...,N \\ \mu_{\mathrm s,k} (\nu) &= \frac{B_k}{S_k^- (\nu)} \int_{\partial B_k} \left\langle \b S^+ (\nu,\b x), \b N_k \right\rangle  \d s(\b x), \quad k=1,...,N.
\end{align*}

If the assumptions of the previous section still hold, then we can approximate the absorbed power as in the derivation of \eqref{eqCombinedPATSimplified} resulting to
\[
\mu_{\mathrm a,k} (\nu) \simeq \frac{B_k}{S_k^- (\nu)}\int_{B_k} \Re(\mathcal F_t\mu(\nu,\b x))\d \b x .
\]

Observing the above relations, we can identify the connection between the real part of the Fourier transform of $\mu$ and the absorption coefficient. But still arises the question if the obtained absorption and scattering coefficients are the ones that appear in the RTE, see \autoref{sePAT}. The fact that they depend on the incident average rate, results to an indirect dependence on the initial illumination characterizing the coefficients as non material parameters. If we omit the Born approximation, the dependence on the electric fields is stronger. In addition, the assumptions on the inner structure of the medium, make the above formulas applicable only in special cases.
\end{remark}
 

\section{Conclusions}
To our knowledge this paper is the first to use Maxwell's equations to model PAT. 
So far, the Maxwell's equations were only considered for modelling Thermoacoustic tomography (TAT) where low-frequency radiation is used. However, since recently the connection between the radiative transfer equation and classical electromagnetism has been established \cite{Mis10, Rip11} there is no need to differentiate from the proposed model considering the different modalities photo- or thermo-acoustics. Additionally, in our model the frequency dependence of the optical parameter is not neglected and thus the effect of the different incident frequencies to the medium are included. This modelling allows to develop a unifying model for OCT and PAT, and thus we can describe the combined setup.

\section*{Acknowledgements}
The work of OS has been supported by the Austrian Science Fund (FWF), Project P26687-N25 
(Interdisciplinary Coupled Physics Imaging).

\appendix

\section{Solution of Maxwell's equations}
If the charge density $\varrho$ and the electric current $\b j$ are known, then Maxwell's microscopic equations~\eqref{eqMaxwell} can be explicitly solved.
\begin{proposition}\label{thElectricFieldSol}
Let $\b e$ and $\b b$ be solutions of \eqref{eqMaxwell} for given functions $\rho$ and $\b j$. Then, $\b e$ has the form
\begin{align}\label{eqElectricFieldSol}
\b e(t,\b x) &= \frac1{4\pi ct}\int_{\partial B_{ct}(\b x)}\left(\curl_{\b x} \b b(0,\b y)-\frac{4\pi}c \b j(0,\b y)\right)\d s(\b y)+\partial_t\left(\frac1{4\pi c^2t}\int_{\partial B_{ct}(\b x)}\b e(0,\b y)\d s(\b y)\right) \nonumber\\
&-\int_{B_{ct}(0)}\frac1{|\b y|}\left(\grad_{\b x}\rho(t-\tfrac{|\b y|}c,\b x+\b y)+\frac1{c^2}\partial_t \b j(t-\tfrac{|\b y|}c,\b x+\b y)\right)\d \b y.
\end{align}
\end{proposition}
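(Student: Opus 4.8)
The plan is to collapse Maxwell's equations \eqref{eqMaxwell} into a single inhomogeneous wave equation for $\b e$ and then quote the classical Kirchhoff representation of its Cauchy problem. First I would take $\curl_{\b x}$ of Faraday's law \eqref{eqMaxwell3} and substitute the Ampère--Maxwell law \eqref{eqMaxwell4} into the right-hand side; using the identity $\curl_{\b x}\curl_{\b x}\b e=\grad_{\b x}\div_{\b x}\b e-\Delta_{\b x}\b e$ together with Gauss's law \eqref{eqMaxwell1} (which turns the divergence term into $4\pi\grad_{\b x}\rho$) and rearranging, one obtains
\[
\frac1{c^2}\partial_{tt}\b e-\Delta_{\b x}\b e=-4\pi\grad_{\b x}\rho-\frac{4\pi}{c^2}\partial_t\b j.
\]
After multiplying by $c^2$, this is a vector wave equation $\partial_{tt}\b e-c^2\Delta_{\b x}\b e=\b S$ with source $\b S=-4\pi c^2\grad_{\b x}\rho-4\pi\partial_t\b j$.

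Next I would identify the Cauchy data. The value $\b e(0,\cdot)$ is already at our disposal, while the initial velocity is supplied by \eqref{eqMaxwell4} evaluated at $t=0$, namely $\partial_t\b e(0,\b y)=c\,\curl_{\b x}\b b(0,\b y)-4\pi\b j(0,\b y)$. For these data the (unique) solution of $\partial_{tt}u-c^2\Delta u=\b S$ in three space dimensions is given by Kirchhoff's formula: a homogeneous part $\partial_t\bigl(\frac1{4\pi c^2 t}\int_{\partial B_{ct}(\b x)}\b e(0,\b y)\d s(\b y)\bigr)+\frac1{4\pi c^2 t}\int_{\partial B_{ct}(\b x)}\partial_t\b e(0,\b y)\d s(\b y)$ and a retarded potential $\frac1{4\pi c^2}\int_{B_{ct}(\b x)}\frac{\b S(t-|\b x-\b y|/c,\b y)}{|\b x-\b y|}\d \b y$. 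Inserting the expression for $\partial_t\b e(0,\cdot)$ into the velocity surface integral and collecting the constant $\frac1{4\pi c^2 t}\cdot c=\frac1{4\pi ct}$ reproduces exactly the first boundary term of \eqref{eqElectricFieldSol}, while the $\b e(0,\cdot)$ term is the second boundary term verbatim.

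Finally I would simplify the retarded potential. Substituting $\b S$ and then changing variables $\b y\mapsto\b x+\b y$, which maps $B_{ct}(\b x)$ onto $B_{ct}(0)$ and turns $|\b x-\b y|$ into $|\b y|$, yields the volume contribution $-\int_{B_{ct}(0)}\frac1{|\b y|}\bigl(\grad_{\b x}\rho(t-\tfrac{|\b y|}c,\b x+\b y)+\frac1{c^2}\partial_t\b j(t-\tfrac{|\b y|}c,\b x+\b y)\bigr)\d \b y$, where the factor $\frac1{4\pi c^2}\cdot 4\pi c^2=1$ normalises the $\grad\rho$ term and $\frac1{4\pi c^2}\cdot 4\pi=\frac1{c^2}$ survives in the $\partial_t\b j$ term. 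This is precisely the last term of \eqref{eqElectricFieldSol}.

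The computation is essentially bookkeeping; the only point deserving care is the logical structure. I am not constructing a solution but asserting that the \emph{given} Maxwell solution $(\b e,\b b)$ must equal the Kirchhoff formula, so the argument rests on uniqueness of the Cauchy problem for the wave equation: since $\b e$ satisfies the same wave equation with the same initial data $\b e(0,\cdot)$ and $\partial_t\b e(0,\cdot)$, it coincides with the right-hand side of \eqref{eqElectricFieldSol}. The remaining technical obstacle is regularity and integrability — legitimacy of the vector identities, of differentiating under the integral sign, and convergence of the retarded integral — but since $\rho$ and $\b j$ are smooth with compact support and the domain $B_{ct}(\b x)$ is bounded, these present no genuine difficulty.
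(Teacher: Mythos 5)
Your proposal is correct and follows essentially the same route as the paper: reduce Maxwell's equations to the inhomogeneous vector wave equation $\frac1{c^2}\partial_{tt}\b e-\Delta_{\b x}\b e=-4\pi(\grad_{\b x}\rho+\frac1{c^2}\partial_t\b j)$ via $\curl\curl=\grad\div-\Delta$ and Gauss's law, read off the Cauchy data from the Amp\`ere--Maxwell law at $t=0$, and invoke Kirchhoff's formula together with Duhamel's principle and uniqueness. Your explicit remark that the argument rests on uniqueness of the Cauchy problem (since $\b e$ is a given solution, not a constructed one) is a point the paper leaves implicit, and your initial velocity $\partial_t\b e(0,\cdot)=c\,\curl_{\b x}\b b(0,\cdot)-4\pi\b j(0,\cdot)$ is the one actually consistent with the final formula.
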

\begin{proof}
We combine the equations \eqref{eqMaxwell3} and \eqref{eqMaxwell4} to the vector wave equation
\[ \frac1{c^2}\partial_{tt}\b e(t,\b x)+\curl_{\b x}\curl_{\b x} \b e(t,\b x)=-\frac{4\pi}{c^2}\partial_t \b j(t,\b x). \]
Then, using the vector identity $\curl_{\b x}\curl_{\b x} \b e(t,\b x)=\grad_{\b x}\div_{\b x} \b e(t,\b x)-\Delta_x \b e(t,\b x)$, we can transform this with \eqref{eqMaxwell1} to the inhomogeneous wave equation
\[ \frac1{c^2}\partial_{tt} \b e(t,\b x)-\Delta_{\b x} \b e(t,\b x)=-4\pi \b f(t,\b x),\qquad \b f(t,\b x)=\grad_{\b x}\rho(t,\b x)+\frac1{c^2}\partial_t 
\b j(t,\b x). \]
Subtracting the solution $\b e^{(0)}$ of the homogeneous wave equation with the initial conditions $\b e^{(0)}(0,\b x)=\b e(0,\b x)$ and $\partial_t \b e^{(0)}(0,\b x)=\curl_{\b x} 
\b b(0,\b x)-\frac{4\pi}c \b j(0,\b x)$, we find that $\b e-\b e^{(0)}$ solves the inhomogeneous wave equation with zero initial data. So, by Duhamel's principle, see for example \cite[Chapter 2.4, Theorem 4]{Eva98}, the solution is given by
\[ \b e(t,\b x) = \b e^{(0)}(t,\b x)-\int_0^{ct}\frac1{ct-\zeta}\int_{\partial B_{ct-\zeta}(\b x)}\b f(\tfrac{\zeta}c,\b y)\d s(\b y) \d\zeta. \]
Combining the two integrals, we obtain
\begin{equation}\label{eqCompact}
 \b e(t,\b x) = \b e^{(0)}(t,\b x)-\int_{B_{ct}(\b x)}\frac1{|\b y-\b x|}\b f(t-\tfrac{|\b y-\b x|}c,\b y)\d \b y.
\end{equation}
Plugging in the explicit formula for the solution $\b e^{(0)}$ of the homogeneous wave equation, see for example \cite[Chapter 2.4, Theorem 2]{Eva98}, we arrive at~\eqref{eqElectricFieldSol}.
\end{proof}

\bibliographystyle{plain}
\bibliography{ElbMinSch15a}

\end{document}